\newtheorem{theorem}{Theorem}[section]
\newtheorem{lemma}[theorem]{Lemma}
\newtheorem{question}[theorem]{Question}
\newtheorem{corollary}[theorem]{Corollary}
\newtheorem{proposition}[theorem]{Proposition}
\newtheorem{condition}[theorem]{Condition}
\theoremstyle{definition}
\newtheorem{definition}[theorem]{Definition}
\newtheorem{previous assumption}[theorem]{Previous Assumption}
\theoremstyle{remark}
\numberwithin{equation}{section}
\newcommand{\pairwith}[2]{\langle{#1},{#2}\rangle}
\newcommand{\GL}[1]{\mathrm{GL}_{#1}}
\newcommand {\EcurlCpps}[1] {E_{\curlC_{#1}}(q,q^{-s})}
\newcommand {\funddom}[1] {D_{#1}}
\newcommand {\valuation}[1] {v(#1)}
\newcommand {\loclinpiece}[1] {\loclinfnonZlattice(\cellassignfn_{#1})}
\def \alphapairxi {\ensuremath{\pairwith{\alpha}{\xi}}}
\def \Aut {\mathrm{Aut}}
\def \begindisplayarray {\begin{displaymath}\begin{array}}
\def \begindm {\begin{displaymath}}
\def \boldanought {\ensuremath{\boldsymbol{a}}}
\def \bolde {\ensuremath{\boldsymbol{e}}}
\def \boldnought {\ensuremath{\boldsymbol{0}}}
\def \boldt {\ensuremath{\boldsymbol{t}}}
\def \boldu {\ensuremath{\boldsymbol{u}}}
\def \boldv {\ensuremath{\boldsymbol{v}}}
\def \boldx {\ensuremath{\boldsymbol{x}}}
\def \boldy {\ensuremath{\boldsymbol{y}}}
\def \cellassignfn {F}
\def \cellsofcomplex {\curlF}
\def \ckji {\ensuremath{c_k(j,i)}}
\def \curlB {\mathcal{B}}
\def \curlC {\mathcal{C}}
\def \curlF {\mathcal{F}}
\def \curlFext {\mathcal{F}^2}
\def \curlG {\mathcal{G}}
\def \curlL {\mathcal{L}}
\def \curlU {\mathcal{U}}
\def \curlV {\mathcal{V}}
\def \curlW {\mathcal{W}}
\def \detrho {\ensuremath{\mathrm{det}\rho}}
\def \detrhog {\ensuremath{\mathrm{det}\rho(g)}}
\def \detrhoxipi {\ensuremath{\textrm{det}\rho(\xi(\pi))}}
\def \diag {\ensuremath{\textrm{diag}}}
\def \disjointunion {\ensuremath{\coprod}}
\def \enddisplayarray {\end{array}\end{displaymath}}
\def \enddm {\end{displaymath}}
\def \fnew {f}
\def \Fp {\mathbb{F}_p}
\def \Gadd {\mathbb{G}_a}
\def \genfn {E}
\def \Gmult {\mathbb{G}_m}
\def \GLn {\mathrm{GL}_n}
\def \globalfield {{\mathfrak{K}}}
\def \globalfielda {\mathfrak{K}}
\def \globalringofintegers {\mathfrak{O}}
\def \gothicp {\ensuremath{\mathfrak{p}}}
\def \HomGmT {\ensuremath{\mathrm{Hom}(\mathbb{G}_m,T)}}
\def \HomTGm {\ensuremath{\mathrm{Hom}(T,\mathbb{G}_m)}}
\def \idealp {\mathfrak{p}}
\def \intcurlGplus {\ensuremath{\int_{\mathcal{G}^{+}}}}
\def \intGplus {\ensuremath{\int_{G^{+}}}}
\def \integers {\mathbb{Z}}
\def \integerslplusd {\mathbb{Z}^{l+d}}
\def \localfield {{\mathfrak{K}_{\mathfrak{p}}}}
\def \localringofintegers  {\mathfrak{o}}
\def \localringofintegersa {\mathfrak{o}}
\def \loclinfnonZlattice {\gamma}
\def \matrixstart {\left( \begin{array}}
\def \matrixend {\end{array} \right) }
\def \maxideal {\mathfrak{p}}
\def \Mn {\ensuremath{\mathrm{M}_n}}
\def \muG {\ensuremath{\mu_G}}
\def \naturals {\mathbb{N}}
\def \naturalszero {\mathbb{N}_0}
\def \nonredG {\mathcal{G}}
\def \normal {\ensuremath{\lhd}}
\def \negroots {\ensuremath{\Phi^-}}
\def \pgoestopinverse {\ensuremath{q\rightarrow q^{-1}}}
\def \posroots {\ensuremath{\Phi^+}}
\def \Qpadic {\mathbb{Q}_p}
\def \Qp {\mathbb{Q}_p}
\def \rationals {\mathbb{Q}}
\def \reals {\mathbb{R}}
\def \realsnonneg {\mathbb{R}_{\geq0}}
\def \thespan {\mbox{span}}
\def \suchthat {\ensuremath{\,|\,}}
\def \Sym {\ensuremath{\textrm{Sym}}}
\def \tgroup {\mathscr{T}}
\def \tensor {\otimes}
\def \thetaxipi {\ensuremath{\theta(\xi(\pi))}}
\def \unifparam {\pi}
\def \weightsi {\tilde{\omega}}
\def \wtxi {\ensuremath{wt_\xi}}
\def \wxi {\ensuremath{w_\xi}}
\def \wXiw {\ensuremath{w\Xi_w}}
\def \wXiwplus {\ensuremath{w\Xi_w^+}}
\def \wxitxi {\ensuremath{w_\xi t_\xi}}
\def \Xiplus {\ensuremath{\Xi^+}}
\def \Xiw {\ensuremath{\Xi_w}}
\def \Xiwplus {\ensuremath{\Xi_w^+}}
\def \xipi {\ensuremath{\xi(\pi)}}
\def \xibasisi {\ensuremath{\xi}}
\def \xibasis {\ensuremath{\underline{\xi}}}
\def \Zp  {\mathbb{Z}_p}
\def \ZnonredGrhops {\ensuremath{Z_{\mathcal{G},\rho,{\maxideal}}(s)}}
\def \ZcurlGrhops {\ensuremath{Z_{\nonredG,\rho,\maxideal}(s)}}
\def \ZGrhops {\ensuremath{Z_{G,\rho,\theta,\maxideal}(s)}}
\def \ZcurlGrhoidealps {\ensuremath{Z_{\mathcal{G},\rho,\idealp}(s)}}
\begin{document}
\pagestyle{plain}

\title{Uniformity and Functional Equations for Local Zeta Functions of $\mathfrak{K}$-split Algebraic Groups}
\author{By Mark N. Berman}

\subjclass[2000]{05A15, 11S45, 20E07, 22E50}

\keywords{Zeta functions of algebraic groups, zeta functions of finitely generated torsion-free nilpotent groups, enumerative combinatorics, $p$-adic integration, uniformity, local functional equations}


\begin{abstract}
We study the local zeta functions of an algebraic group $\mathcal{G}$ defined over $\globalfield$ together with a faithful $\globalfield$-rational representation $\rho$ for a finite extension $\globalfield$ of $\mathbb{Q}$. These are given by integrals over $\maxideal$-adic points of $\mathcal{G}$ determined by $\rho$ for a prime $\maxideal$ of $\globalfield$. We prove that the local zeta functions are almost uniform for all $\globalfield$-split groups whose unipotent radical satisfies a certain lifting property. This property is automatically satisfied if $\mathcal{G}$ is reductive. We provide a further criterion in terms of invariants of $\mathcal{G}$ and $\rho$ which guarantees that the local zeta functions satisfy functional equations for almost all primes of $\globalfield$. We obtain these results by using a $\maxideal$-adic Bruhat decomposition of Iwahori and Matsumoto \cite{IwMa} to express the zeta function as a weighted sum over the Weyl group $W$ associated to $\mathcal{G}$ of generating functions over lattice points of a polyhedral cone. The functional equation reflects an interplay between symmetries of the Weyl group and reciprocities of the combinatorial object. We construct families of groups with representations violating our second structural criterion whose local zeta functions do not satisfy functional equations. Our work generalizes results of Igusa \cite{Igusa} and du Sautoy and Lubotzky \cite{duSL} and has implications for zeta functions of finitely generated torsion-free nilpotent groups.

\end{abstract}

\maketitle

\section{Introduction}\label{introduction section} Let $\curlG$ be an algebraic group defined over a finite extension $\globalfield$ of $\rationals$ and
let $\nobreak{\rho:\nonredG\to \GL{n}}$ be a faithful $\globalfielda$-rational
representation of $\nonredG$. Let $\globalringofintegers$ be the ring of integers of $\globalfield$ and $\idealp$ a prime of $\globalringofintegers$. We denote the localization of $\globalfield$ at $\maxideal$ by $\localfield$, its ring of integers by $\localringofintegers$ (the dependence on $\maxideal$ being understood) and the size of its residue field by $q$. Let $\unifparam$ be a
fixed uniformizing parameter for $\localringofintegers$. For $X\leq \nonredG$ put
\begin{eqnarray*}
X^+&:=&\{g\in X(\localfield)\suchthat\rho(g)\in\Mn(\localringofintegersa)\},\\
X(\localringofintegersa)&:=&\{g\in X(\localfield)\suchthat\rho(g)\in\GL{n}(\localringofintegersa)\}.
\end{eqnarray*}
We define the local zeta function of $(\nonredG,\rho)$ at the prime
$\idealp$ to be
\begindm
\ZcurlGrhoidealps:=\intcurlGplus|\det{\rho(g)}|^s\mu_{\nonredG}(g),
\enddm
where $s$ is a complex variable, $|.|$ is the $\idealp$-adic absolute value and $\mu_{\nonredG}$ is
the right invariant Haar measure on $\nonredG(\globalfielda_\idealp)$ normalized such that
$\mu_{\nonredG}(\nonredG(\localringofintegersa))=1$.

The local zeta functions $\ZcurlGrhoidealps$ are said to be (almost) uniform if there exists a rational function $Q(X,Y)$ such that for (almost) all primes $\maxideal$, $\ZcurlGrhoidealps=Q(q,q^{-s})$. In this case, set $\ZcurlGrhoidealps|_{q\to q^{-1}}:=Q(q^{-1}, q^s)$. We say that $\ZcurlGrhoidealps$ satisfies a functional equation if
\begin{equation}\label{equation fn eq}
\ZcurlGrhoidealps|_{q\to q^{-1}}=(-1)^{m}q^{a+bs}\ZcurlGrhoidealps
\end{equation}
for some integers $m,a,b$. In fact there is a well-defined notion of a functional equation even in cases in which the local zeta functions attached to an object are not almost uniform but rather are determined by counting rational points on certain varieties (see, for instance, \cite{vollfneq}). We will impose conditions on $(\nonredG,\rho)$ that will imply that the local zeta functions are almost uniform. However, the question of uniformity for these integrals, in general, remains open. 

Early interest in the function $\ZcurlGrhoidealps$ came from the fact that it is a natural generalization of the Dedekind zeta function. Indeed, the latter may be obtained by taking $\nonredG=\GL{1}$ and $\rho$ the natural representation. It was studied in other special cases by Hey, Weil, Tamagawa, Satake and Macdonald \cite{hey, weil, tamagawa, satake, macdonald}. Tamagawa considered the case $\nonredG=\GL{n}$ with the natural representation and showed that the global zeta function (defined to be the product of the local zeta functions over all primes $\maxideal$) has meromorphic continuation to the whole of the complex plane. In \cite[Chapter 6]{duSW} the authors proved that for several families of classical groups, the global zeta functions have natural boundaries and thus cannot be meromorphically continued. Nevertheless, it remains interesting to ask which properties of the Dedekind zeta function carry over to the function $\ZcurlGrhoidealps$ for all groups $\nonredG$ and representations $\rho$.

The question of whether the functions $\ZcurlGrhoidealps$ are (almost) uniform and satisfy functional equations was first addressed in a more general setting by Igusa \cite{Igusa}. He chose Serre's canonical measure on $\nonredG$, which differs from the Haar measure used in our definition. He was able to derive an explicit form for $\ZcurlGrhoidealps$ in terms of $q$ and $q^{-s}$. This form involved a certain sum, over the Weyl group $W$ of $\nonredG$, of rational functions weighted by the length function on $W$. Igusa was able to utilize a symmetry of the Weyl group together with reciprocities for the rational functions to prove that the local zeta functions satisfy functional equations for almost all primes. The main tool he employed was a $\gothicp$-adic Bruhat decomposition due to Iwahori and Matsumoto \cite{IwMa}. Although his integral differed from our ours, the method he employed will be essential to our analysis.

Independent interest in the function $\ZcurlGrhops$ was generated by \cite{GSS}, in which the authors showed that the zeta function in fact expresses a subgroup counting function in a completely different context. Let $\Gamma$ be a finitely generated torsion-free nilpotent group (or $\tgroup$-group) and for $*\in\{\leq,\normal,\wedge\}$ define 
\begin{displaymath}
\displaystyle{\zeta^{*}_{\Gamma}(s):=\sum_{n=1}^{\infty}a^{*}_n n^{-s},}
\end{displaymath}
where $a^{\leq}_n, a^{\normal}_n$ and $a^{\wedge}_n$ denote the number of subgroups $H$ of $\Gamma$ of index $n$ satisfying $H\leq\Gamma$, $H\normal\Gamma$ and $\widehat{H}\cong\widehat{\Gamma}$ respectively . Here $\widehat{\phantom{\Gamma}}$ denotes the profinite completion. $\zeta^{\wedge}_{\Gamma}(s)$ has been given no name to date; we refer to it as the proisomorphic zeta function of $\Gamma$. Define local zeta functions for each prime $p$ by
\begindm
\displaystyle{\zeta^{*}_{\Gamma,p}(s):=\sum_{k=0}^{\infty}a^{*}_{p^k} p^{-ks}.}
\enddm
Grunewald, Segal and Smith showed that for $*\in\{\leq,\normal,\wedge\}$ and for each prime $p$, $\zeta_{\Gamma, p}^*(s)$ is a rational function in $p^{-s}$. They also showed that, as a straightforward consequence of nilpotency, the zeta function decomposes as an Euler product:
\begindm
\displaystyle{\zeta^*_{\Gamma}(s)=\prod_{p}\zeta^*_{\Gamma, p}(s).}
\enddm
Furthermore, they realized the local proisomorphic zeta function $\zeta^{\wedge}_{\Gamma,p}(s)$ as the local zeta function $Z_{\curlG, \rho, p}(s)$ of an algebraic group with an associated $\rationals$-rational representation (hence all such realizations are \emph{a fortiori} rational functions in~$p^{-s}$). This prompted du Sautoy and Lubotzky to study $\ZcurlGrhoidealps$ with a view to ascertaining whether the local proisomorphic zeta functions $\zeta^{\wedge}_{\Gamma,p}(s)$ would be uniform and whether they would satisfy functional equations. 

It is well-known that if $\nonredG_0$ is the connected component of $\nonredG$ then $\nonredG_0$ can be expressed as $N\rtimes G$, where $N$ is the unipotent radical of $\nonredG_0$ and $G$ is a (connected) reductive subgroup (see, for instance, \cite[p.\ 9]{Borel}). In \cite{duSL} du Sautoy and Lubotzky made a reduction to an integral over the subgroup $G$ and showed that the zeta function $\ZcurlGrhops$ would be unchanged for almost all primes $\maxideal$. Specifically, they showed that
\begin{equation}\label{equation duSL reduction}
\ZnonredGrhops=\int_{G^+}|\detrhog|^s\theta(g)\mu_{G}(g),
\end{equation} where $\theta$ is the function $G\to\realsnonneg$ given by
\begin{equation}\label{definition first of theta}
\theta(g):=\mu_N(\{n\in N\suchthat ng\in\nonredG^+_0\})
\end{equation}
and $\mu_N$ is the right invariant Haar measure on $N$ normalized such that ${\mu_N(N(\localringofintegers))=1}$. They were also able to decompose the function $\theta$ into pieces defined relative to a normal series for $N$; this relied on a certain lifting assumption on $\nonredG_0$ (see \cite[Assumption~2.3]{duSL}, restated in our paper as Condition~\ref{lifting condition}). They further assumed that $G$ would split over $\globalfield$, that $\theta$ would be (the $\maxideal$-adic absolute value of) a character on $G$, that the rank of the maximal central torus of $G$ would be $1$, and that among the irreducible components $\rho_1,\ldots,\rho_r$ of $\rho|_G$ there would be one whose dominant weight `dominates' the dominant weights of the other components. We refer the reader to \cite{duSL} for a definition of the latter. Under these assumptions, they were able to extend Igusa's method to obtain an explicit form for the local zeta functions $\ZcurlGrhops$, deducing that they would be almost uniform in $\maxideal$ and would satisfy functional equations.

We now state our main theorem.

\begin{theorem}~\label{main theorem}
Let $\nonredG$ be an algebraic group defined over $\globalfield$ and
let $\rho$ be a faithful $\globalfield$-rational representation $\nonredG\to\GL{n}$. Write $\nonredG_0=N \rtimes G$, where $\nonredG_0$ is
the connected component of $\nonredG$, $N$ is its unipotent radical
and $G$ is a connected reductive subgroup. Suppose that, after fixing some
suitable decomposition of the representation space, $(\nonredG_0,
G,\rho)$ satisfies Condition~{\ref{lifting condition}}
(cf. Section~\ref{reduction section}) for almost all primes $\maxideal$. Suppose
further that $G$ splits over
$\globalfield$. Let $d$ denote the rank of the maximal central torus of $\nonredG_0$ and $r$ the number of
irreducible components of $\rho|_G$. Then
\begin{enumerate}
\item the local zeta functions $\ZnonredGrhops$ are almost uniform, i.e. uniform outside a finite set of primes
\item if $r=d$, for almost all primes $\maxideal$, $\ZnonredGrhops$ satisfies the functional
equation
\begindm
\ZnonredGrhops|_{q\to q^{-1}}=(-1)^{l+d}q^{|\Phi^+|+c-ns}\ZnonredGrhops
\enddm
where $l$ is the number of fundamental roots of the root system associated to~$G$, $\Phi^+$ is a set of positive roots, and $c$ is a non-negative integer. If $\nonredG$ is reductive then $c=0$.
\end{enumerate}
\end{theorem}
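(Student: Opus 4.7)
Our strategy combines the reduction of du Sautoy and Lubotzky with Igusa's Weyl-group approach, generalized through an explicit combinatorial analysis on polyhedral cones. The first step is to apply the reduction (\ref{equation duSL reduction}) to rewrite $\ZcurlGrhoidealps$ as an integral over $\Gplus$ of $|\detrhog|^s\,\theta(g)$ against $\muG$. Because Condition~\ref{lifting condition} holds for almost all primes, $\theta$ should admit a decomposition along a normal series of $N$ as a product $\prodthetag$, each factor of which is controlled by the weights of the $G$-action on the successive quotients $\NibyNiplusone$. This reduces the problem to analyzing a single integral over $\Gplus$ weighted by absolute values of an explicit finite list of characters of $G$.

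Next, since $G$ splits over $\globalfield$, I would fix a Borel $\curlB$ and a split maximal torus $T$ and apply the $\maxideal$-adic Bruhat decomposition of Iwahori and Matsumoto to express $\Gplus$ as a disjoint union of double cosets $\iwahoridc$ indexed by pairs $(w,\xi)\in W\times\Xiwplus$, where $\Xiwplus$ lies inside the cocharacter lattice $\HomGmT$. On each such cell the integrand is essentially constant: both $|\detrhog|^s$ and the factors $\theta_i(g)$ become explicit characters of $T$ evaluated at $\xipi$, and the Haar volume contributes a standard factor together with $q^{-\ell(w)}$. Summing over the lattice points of $\Xiwplus$ then yields the generating function of a rational polyhedral cone $\curlCIw$. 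The full zeta function thus becomes a finite $W$-indexed sum of such generating functions, which immediately proves the almost uniformity claim (1) and gives an explicit closed form.

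For the functional equation in (2) I would invoke Stanley's reciprocity for rational generating functions on pointed rational polyhedral cones: under $\pgoestopinverse$ the generating function of a cone transforms, up to a sign and an explicit monomial, into that of its relative interior. In parallel, the Weyl-group involution $w\mapsto ww_0$ (with $w_0$ the longest element) should send $\curlCIw$ to the complementary cone $\curlClminusIwwnought$. I would then show that composing these two transformations pairs up the Weyl-indexed summands and reassembles the original sum multiplied precisely by $(-1)^{l+d}q^{|\Phiplus|+c-ns}$. The hypothesis $r=d$ is exactly what forces the characters coming from $\detrho$ together with the $\theta_i$ to span a sublattice of full rank inside the character lattice of the central torus; without this rank condition the exponent in $q^{-s}$ coming from the interior-cone shift cannot be factored out as a single monomial, and the pairing fails to close.

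The hard part will be this final bookkeeping. A priori there is no reason for the combinatorial reciprocity on $\curlCIw$ and the group-theoretic involution $w\mapsto ww_0$ to produce compatible contributions, and one must verify that the supports, the weighting monomials, and the sign conventions all line up after passing from each closed cone to the relative interior of its complement. The constant $c$ should emerge naturally as the number of non-trivial steps in the filtration of $N$, explaining its vanishing when $\nonredG$ is reductive, while $|\Phiplus|$ enters via the volumes of the Iwahori cells together with $\ell(w_0)=|\Phiplus|$. When $r>d$ the rank obstruction genuinely bites, which is consistent with the counterexamples mentioned in the introduction.
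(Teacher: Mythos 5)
Your high-level architecture matches the paper's: reduce via du Sautoy--Lubotzky to an integral over $G^+$ with the weighting function $\theta$, apply the Iwahori--Matsumoto Bruhat decomposition to get a $W$-indexed sum of generating functions over lattice points of a cone, then combine a cone reciprocity with the involution $w\mapsto ww_0$. However, there are three genuine gaps.

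First and most seriously, you treat $\theta$ as though the decomposition $\theta = \prod_{i} \theta_i$ yields a single fixed character of $T$ on each double coset, so that the sum over $\Xi_w^+$ is a generating function with a fixed monomial summand to which Stanley's Theorem 4.6.14 applies off the shelf. This is not so. Even on $T^+$, the function $\theta$ is only a \emph{piecewise} character: the monomial $\prod_j|\lambda_j(t)|^{-m_j(\sigma)}$ depends on the ordering $\sigma$ of the valuations of the eigenvalues of $t$ (this is the content of Lemma~\ref{local linearity of theta lemma}). Consequently the cone $\curlC$ must be subdivided by the internal hyperplanes $H_{ij}$ into a polyhedral cell complex $(\curlC,\curlF)$, and the summand $q^{(A+\gamma(F_{\boldsymbol e})).\boldsymbol e-(B.\boldsymbol e)s}$ varies from cell to cell. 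Stanley's reciprocity does not apply to such a generating function. The paper must \emph{prove} a new reciprocity, Theorem~\ref{reciprocity theorem}, whose engine is the dimension-counting identity of Proposition~\ref{dimension counting proposition} (an inclusion-exclusion over cells of the complex), together with a cell-by-cell inversion (Lemma~\ref{inversion of a cell lemma}). Your proposal is silent on this, and without it the argument does not go through.

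Second, your account of the role of $r=d$ is imprecise, and this matters because it is the place where the hypothesis is actually used. The correct statement is that faithfulness forces the $\alpha_1,\dots,\alpha_l,\omega_1^{-1},\dots,\omega_r^{-1}$ to generate $\HomTGm$; since $\HomTGm$ has rank $l+d$, the equality $r=d$ is precisely what makes these a $\integers$-basis, hence $\curlC\cap\integers^{l+d}=\naturalszero^{l+d}$, i.e.\ $\curlC$ is \emph{simple}, not merely simplicial. Simplicity is needed twice: once in Theorem~\ref{reciprocity theorem} (the reciprocity (\ref{equation reciprocity of generating fn}) is false for general $I$ on a non-simplicial cone), and again in Corollary~\ref{fn eq corollary} where the minimal vector $\boldsymbol a_0=(0,\dots,0,1,\dots,1)$ implementing the translation $\curlC_{[m]\backslash I}\cap\integers^{m}=\boldsymbol a_0+\curlC_{[l]\backslash I}\cap\integers^{m}$ must exist and lie in every internal hyperplane. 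Your phrase about the characters ``spanning a sublattice of full rank inside the character lattice of the central torus'' does not capture either point.

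Third, your identification of $c$ as ``the number of non-trivial steps in the filtration of $N$'' is wrong. In the paper, $c=\gamma(F_{\boldsymbol a_0}).\boldsymbol a_0$, the pairing of the minimal vector with the piecewise-constant correction coming from $\theta$; it is zero when $\nonredG$ is reductive because then $\theta\equiv 1$. The number of filtration steps is a different quantity and plays no role in the exponent.
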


For a faithful representation we necessarily have $r\geq d$ (each irreducible component has a maximal central torus of rank $1$ or $0$; see \cite[p.\ 700]{Igusa}). We will show in Section~\ref{counterexample section} that in one direction our result is best possible in the following sense: there exist groups $\nonredG$ and representations $\rho$ for which $r>d$ whose local zeta functions do not satisfy functional equations in the sense of (\ref{equation fn eq}). A more complicated counterexample has in fact been given previously by Martin \cite{Martin}. He studied the integral as defined by Igusa for a certain $3402$-dimensional irreducible representation of $\GL{7}$. Here, since $r=d=1$, we infer from our Theorem~\ref{main theorem} that it is the different choice of measure in this example (namely the canonical measure) that is responsible for the break-down of the functional equation.

The proof of Theorem~\ref{main theorem} relies on the splitting assumption to utilize a $\maxideal$-adic Bruhat decomposition of Iwahori-Matsumoto. As suggested in \cite[p.\ 73]{duSL}, it may be possible to remove this assumption using the notion of a $\maxideal$-adic decomposition for non-split reductive groups due to Bruhat and Tits (see \cite{bruhattitsI} and \cite{bruhattitsII}). Condition~{\ref{lifting condition}} is needed to reduce to the case of a reductive group, and cannot easily be removed without a deeper understanding of the action of the reductive subgroup on the unipotent radical, as described below. In the case that $\nonredG$ is reductive, Condition~\ref{lifting condition} holds trivially. We thus obtain the following.

\begin{corollary}
If $\nonredG$ is a $\globalfield$-split reductive group defined over $\globalfield$ with faithful $\globalfield$-rational representation $\rho$ then its local zeta functions satisfy statements (1) and (2) of Theorem~\ref{main theorem}.
\end{corollary}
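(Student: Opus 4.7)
The plan is to verify that every hypothesis of Theorem~\ref{main theorem} is satisfied when $\nonredG$ is a $\globalfield$-split reductive group, so that both conclusions follow by direct specialization. First I would note that if $\nonredG$ is reductive then so is its connected component $\nonredG_0$; consequently the unipotent radical $N$ in the decomposition $\nonredG_0 = N \rtimes G$ is trivial and $G = \nonredG_0$. In particular $G$ inherits the $\globalfield$-split property from $\nonredG$, so the splitting hypothesis of Theorem~\ref{main theorem} is immediate.

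Next I would observe that Condition~\ref{lifting condition}, which governs compatibility between the $\maxideal$-adic points of a normal series for $N$ and the action of $G$, is vacuous once $N$ is trivial: there are no nontrivial terms of any such filtration to lift, so the hypothesis is met for \emph{every} prime $\maxideal$ (not merely almost all). Hence the assumptions of Theorem~\ref{main theorem} are all in force, and conclusion~(1) delivers almost uniformity of $\ZcurlGrhoidealps$ with no further work.

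For conclusion~(2), under the additional hypothesis $r = d$, Theorem~\ref{main theorem} furnishes a functional equation with exponent $|\Phi^+| + c - ns$, where $c$ is a non-negative integer arising from the reduction step of Section~\ref{reduction section}. The last sentence of Theorem~\ref{main theorem} asserts that $c = 0$ whenever $\nonredG$ is reductive; combining these observations, the functional equation takes the simplified shape
\begindm
\ZcurlGrhoidealps|_{q\to q^{-1}} = (-1)^{l+d}\, q^{|\Phi^+| - ns}\,\ZcurlGrhoidealps.
\enddm
There is no genuine obstacle here, since the corollary is nothing more than the specialization of Theorem~\ref{main theorem} to $N = \{1\}$; the only things to keep in mind are that Condition~\ref{lifting condition} is indeed vacuous in this setting and that the constant $c$ coming from the reduction to the reductive subgroup vanishes, both of which are transparent once the roles of $N$ and $G$ in Theorem~\ref{main theorem} are recalled.
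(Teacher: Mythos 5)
Your proposal is correct and takes essentially the same approach as the paper: the paper simply observes that Condition~\ref{lifting condition} holds trivially when $\nonredG$ is reductive (since $N$ is then trivial) and then invokes Theorem~\ref{main theorem}. Your write-up spells out the same specialization $N=\{1\}$, $G=\nonredG_0$ in a bit more detail, including the transparent inheritance of the split hypothesis and the vanishing of $c$, none of which departs from the paper's reasoning.
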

Our theorem generalizes \cite[Theorem~6.1]{duSL} as follows. We allow the maximal central torus to have arbitrary rank and we no longer make the assumption that the function $\theta$ is (the $\maxideal$-adic absolute value of) a
character on $G$. We do not assume that there is any relationship between the dominant weights of the irreducible components of $\rho|_G$; rather, we restrict the number of components to $d$, the rank of the maximal central torus of $G$. In \cite{duSL}, the authors proved a functional equation in the case that $d=1$, $r$ is arbitrary and among the dominant weights of the irreducible components of the representation, there is one which `dominates' all the others. Note that our Theorem~\ref{main theorem}~(2) does not reduce to their Theorem~A~(2) under their assumptions.  In fact their proof of the latter is flawed, which we will explain at the end of Section~\ref{functional equation subsection}. However, in the case $r=1$ their proof is valid and their result becomes a special case of our Theorem~\ref{main theorem}~(2).

The functional equation in part (2) of our main theorem comes, as in Igusa's setting, from an interplay between symmetries of the Weyl group associated to the algebraic group, and reciprocities of the generating functions in the weighted sum. However, in our case these generating functions are not just geometric series (as in \cite{Igusa} and \cite{duSL}); rather, they are certain generalized generating functions over subsets of a polyhedral cone. The reciprocity property we use to prove the functional equation is an extension of a reciprocity theorem due to Stanley \cite[Theorem~4.6.14]{Stanley}. Our proof relies crucially on the fact that the cone is simple. By this we mean that the set of lattice points it contains is freely generated as a commutative monoid (note that a simple cone is, in particular, simplicial). The simplicity of the cone is in turn a consequence of the condition $r=d$. Indeed, it is this insight that enables us to construct our counterexamples in Section~\ref{counterexample section}. An important feature of our combinatorial data is the existence of a `minimal vector' in Corollary~\ref{fn eq corollary} below, in a similar vein to \cite[Corollary 4.6.16]{Stanley}, although our case is slightly different due to the presence of internal structure in the cone. Our proof of a functional equation is reminiscent of \cite{vollfneq}, where the generating functions varied in different cells defined by linear forms (see \cite[Proposition~2.1]{vollfneq}). In \cite[Theorem~A]{vollklopsch2}, the authors considered sums of generating functions possessing `inversion properties' analogous to Lemma~\ref{inversion of a cell lemma} in the present paper, weighted by the numbers of non-degenerate flags in a finite formed space. While the settings are all different, they share a common feature in utilizing both combinatorial reciprocity properties and symmetries of a Weyl or Coxeter group to prove a functional equation (as is the case also in \cite{Igusa} and \cite{duSL}).

We now explain why we have included Condition~\ref{lifting condition} in Theorem~\ref{main theorem} and how this relates to \cite[Theorem~6.1]{duSL}. In \cite{duSL}, the authors analyzed the automorphism group of $U^0_4$, the Lie algebra of upper triangular 4 by 4 matrices, together with a natural representation, and showed that in this case $\theta$ (defined in (\ref{definition first of theta})) is only a `piecewise' character. Specifically, they were able to divide the domain of integration into two regions such that $\theta$ was a character on each. In Section~\ref{reduction section} we will make this notion of `piecewise characters' more precise and show that $\theta$ will be a piecewise character provided that $({\nonredG_0},\rho)$ satisfies Condition~\ref{lifting condition}. We need this property of $\theta$ in order to carry out our analysis of the reduced integral expression for $\ZnonredGrhops$ given in (\ref{equation duSL reduction}).
Since the condition is somewhat technical, we postpone a proper description of it until Section~\ref{reduction section}. Roughly speaking, the condition states that quotients of ${\nonredG_0}$ by certain normal subgroups $N_i$ of the unipotent radical, together with natural representations $\varphi_i:{\nonredG_0}/N_i\to \GL{n}$, enjoy the property that integral points of $\nonredG_0/N_i$ defined with respect to $\varphi_i$ lift to integral points of ${\nonredG_0}$ with respect to~$\rho$. Corollary~4.5 in \cite{duSL} states that our Condition~\ref{lifting condition} is
satisfied for almost all~$\maxideal$. This is in fact incorrect. For instance, if $\nonredG$ is taken to be the
automorphism group of $U_{5}^{0}$ and
$\rho$ the representation with respect to the standard basis for
$U_{5}^{0}$, it can be checked by a straightforward calculation that Condition~\ref{lifting
condition} fails to be satisfied for all primes $\maxideal$ (see \cite[pp.\ 78-84]{berman}). Therefore, a correct formulation of \cite[Theorem~6.1]{duSL} requires Condition~\ref{lifting
condition} just as our Theorem~\ref{main theorem} does. 

Motivated by the examples presented in Section~\ref{counterexample section}, we ask the following.

\begin{question}\label{conjecture proisom zeta fn without fn eq}
Does there exist a finitely generated torsion-free nilpotent group $\Gamma$ such that, for almost all primes, the local zeta function $\zeta^{\wedge}_{\Gamma,p}(s)$ does not satisfy a functional equation?
\end{question}

To explain why this might be so, we briefly describe the relationship between zeta functions of algebraic groups and proisomorphic zeta functions of $\tgroup$-groups. Consider a
nilpotent Lie ring $L$ over $\integers$ of finite rank $n$. Put
$L_p=L\otimes\Zp$ and ${\curlL=L\otimes\Qp}$. If
$H$ is both a Lie subring and a full $\Zp$-sublattice of
$L_p$ (written ${H\leq L_p}$), we write $H\cong L_p$ if there exists
$g\in \Aut\curlL$ such that $L_p.g=H$. 
The following
two results connect zeta functions of groups with zeta functions of
algebraic groups:
\begin{proposition}[cf. \protect{\cite[Theorem~4.1]{GSS}}]\label{proposition proisomorphic group lie alg corr}
Given a $\tgroup$-group $\Gamma$, there exists a nilpotent Lie ring
$L$ of finite rank over $\integers$ such that for almost all primes
$p$,
\begindm
\zeta^{\wedge}_{\Gamma,p}(s)=\zeta^{\wedge}_{L,p}(s):=\sum_{{H\leq
L_p}\atop{H\cong L_p}}|L_p:H|^{-s}.
\enddm
\end{proposition}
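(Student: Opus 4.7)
The proof plan follows the Mal'cev correspondence: associate to $\Gamma$ a nilpotent Lie ring $L$ over $\integers$ via the Baker--Campbell--Hausdorff (BCH) formula, and then transfer the enumeration of proisomorphic finite-index subgroups of $\widehat\Gamma$ to the enumeration of proisomorphic full $\integers$-sub-Lie-rings of $L$, one prime at a time.

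First I would construct $L$. Embed $\Gamma$ into its rational Mal'cev completion $\Gamma_{\rationals}$, a simply connected nilpotent $\rationals$-Lie group with finite-dimensional Lie algebra $\mathfrak{g}$. Via the logarithm, identify $\Gamma$ with a full lattice $\log(\Gamma)\subset\mathfrak{g}$, fix a Mal'cev basis, and take $L$ to be the smallest $\integers$-Lie subring of $\mathfrak{g}$ containing $\log(\Gamma)$. By construction $L$ is a nilpotent Lie ring of finite rank over $\integers$ with $L\otimes\rationals=\mathfrak{g}$, and $L$ and $\log(\Gamma)$ agree after tensoring with $\Zp$ for all but finitely many primes $p$.

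Next I would set up the local correspondence. The BCH series and its formal inverse are polynomials in the Lie bracket with rational coefficients whose denominators involve only primes bounded by a constant depending on the nilpotency class $c$ of $\Gamma$. Hence for every prime $p$ outside a finite exceptional set $S$, BCH yields mutually inverse bijections
\begin{displaymath}
\exp_p : L_p \longrightarrow \Gamma\otimes\Zp, \qquad \log_p : \Gamma\otimes\Zp \longrightarrow L_p
\end{displaymath}
that carry full $\Zp$-sub-Lie-rings of $L_p$ bijectively onto open subgroups of $\Gamma\otimes\Zp$ and preserve indices. It remains to transfer the proisomorphism condition. In Mal'cev coordinates, the group operation on $\Gamma\otimes\Zp$ and the Lie bracket on $L_p$ are expressed by the \emph{same} universal BCH polynomials; consequently a $\Zp$-linear bijection between lattices intertwines Lie brackets if and only if it intertwines the group products. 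Thus, for $p\notin S$, a full $\Zp$-sub-Lie-ring $H\leq L_p$ satisfies $H\cong L_p$ as a Lie ring precisely when $\exp_p(H)\cong\Gamma\otimes\Zp$ as a profinite group. Since $\widehat\Gamma=\prod_p\Gamma\otimes\Zp$ and both the index and the condition $\widehat{H}\cong\widehat\Gamma$ factor prime by prime, this yields $a^{\wedge}_{p^k}(\Gamma)=a^{\wedge}_{p^k}(L)$ for all $k\geq 0$ and all $p\notin S$, giving the claimed equality of local zeta functions.

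The main technical obstacle lies in the third step: one must check that the BCH bijection at a good prime not only pairs up open subobjects correctly but also matches their isomorphism classes, so that a profinite group isomorphism between open subgroups of $\Gamma\otimes\Zp$ automatically corresponds under $\log_p$ to a $\Zp$-linear Lie ring isomorphism between the associated sub-$\Zp$-Lie-rings of $L_p$, and vice versa. For $p\notin S$ this reduces to the tautology that the group and Lie-ring laws are given by identical polynomials in Mal'cev coordinates, whence any $\Zp$-linear bijection preserving one structure preserves the other; summing over $k$ then yields the proposition.
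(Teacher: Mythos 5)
The paper does not prove this proposition — it cites \cite[Theorem~4.1]{GSS}, and the Mal'cev/BCH route you outline is exactly the approach taken there. Your construction of $L$, the reduction to pro-$p$ completions via the Euler product, and the index-preserving BCH bijection between open subgroups of $\widehat{\Gamma}_p$ and full $\Zp$-sub-Lie-rings of $L_p$ for primes outside a finite set are all correct and in line with GSS.

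However, the transfer of the proisomorphism condition is not fully justified as written. What you have actually shown is that a $\Zp$-linear bijection between full sublattices preserves Lie brackets if and only if it preserves BCH products. That covers one direction: a $\Zp$-Lie ring isomorphism $L_p\to H$ conjugates to a group isomorphism. But in the converse direction you must take an abstract (or topological) isomorphism $\psi:\Gamma\otimes\Zp\to\exp_p(H)$ of profinite groups and show that $\log_p\circ\psi\circ\exp_p:L_p\to H$ is $\Zp$-linear; this is not the tautology you invoke, since a group isomorphism has no a priori reason to respect the coordinate-wise additive and scalar structure on the Mal'cev coordinates. The standard way to close this gap is to note that abstract isomorphisms between finitely generated pro-$p$ groups are automatically continuous, hence commute with $\Zp$-powering (as limits of integer powers); $\Zp$-linearity then follows, and your polynomial argument finishes the job. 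Alternatively one can use Mal'cev rigidity to extend $\psi$ to an automorphism of $\Gamma_{\Qp}$, which under $\log$ becomes an element of $\Aut(L\otimes\Qp)$ — this matches the paper's definition of $H\cong L_p$ on the nose (the paper demands a $g\in\Aut(\curlL)$ with $L_p.g=H$, which for full $\Zp$-lattices is equivalent to abstract $\Zp$-Lie ring isomorphism by extending scalars). With either patch your argument is complete.
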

\begin{proposition}[cf. \protect{\cite[Proposition~4.2]{GSS}}]\label{proposition proisomorphic lie alg alg gp corr}
Given a Lie ring $L$ of finite rank over $\integers$, let $\rho$ be
the representation $\Aut{\curlL}\to\GL{n}(\Qp)$ defined by fixing some
$\integers$-basis for $L$. Then $\Aut{\curlL}=\nonredG(\Qp)$, where $\nonredG$ is the algebraic automorphism group of $L\otimes \rationals$, and for each prime
$p$,
\begindm
\zeta^{\wedge}_{L,p}(s)=\intcurlGplus|\det{\rho(g)}|^s\mu_{\curlG}(g).
\enddm
\end{proposition}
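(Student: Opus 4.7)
The plan is to recognize the Dirichlet sum defining $\zeta^{\wedge}_{L,p}(s)$ as a sum over a coset space and then convert it to the $p$-adic integral via the normalization of the Haar measure. First I would pin down the identification $\Aut\curlL = \curlG(\Qp)$: the algebraic automorphism group $\curlG$ of $L\otimes\rationals$ is cut out inside $\GLn$ by the polynomial equations (defined over $\integers$) imposing preservation of the structure constants of the Lie bracket relative to the fixed $\integers$-basis of $L$. Its $\Qp$-points are therefore exactly the $\Qp$-linear Lie-algebra automorphisms of $\curlL$, and in this description $\rho$ is the tautological matrix representation. Under this identification one has $\curlGplus=\{g\in\Aut\curlL\suchthat L_p\cdot g\subseteq L_p\}$ and $\curlGZp=\{g\in\Aut\curlL\suchthat L_p\cdot g=L_p\}$.

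Next I would parameterize the sublattices appearing in the sum. By the definition preceding the proposition, $H\cong L_p$ means $H=L_p\cdot g$ for some $g\in\Aut\curlL$; the inclusion $H\leq L_p$ then forces $g\in\curlGplus$, and conversely any $g\in\curlGplus$ produces such an $H$. Two elements of $\curlGplus$ yield the same $H$ exactly when they differ by a factor of $\curlGZp$ on the left, so $g\mapsto L_p\cdot g$ descends to a bijection
$$\curlGZp\backslash\curlGplus\longleftrightarrow\{H\leq L_p\suchthat H\cong L_p\}.$$
Elementary divisor theory over $\Zp$ (Smith normal form) then gives $[L_p:L_p\cdot g]=|\det\rho(g)|^{-1}$ for every $g\in\curlGplus$.

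Finally I would unfold the Haar integral. Because $\mucurlG$ is right-invariant and normalized so that $\mucurlG(\curlGZp)=1$, and because $g\mapsto|\det\rho(g)|^s$ is constant on left cosets $\curlGZp g$, the decomposition of $\curlGplus$ as a disjoint union of such cosets yields
$$\intcurlGplus|\det\rho(g)|^s\mucurlG(g)=\sum_{g\in\curlGZp\backslash\curlGplus}|\det\rho(g)|^s=\sum_{H\leq L_p,\,H\cong L_p}[L_p:H]^{-s}=\zeta^{\wedge}_{L,p}(s).$$
The argument is essentially a matter of unwinding definitions; the one step warranting real care is the identification $\Aut\curlL=\curlG(\Qp)$, which reduces to observing that Lie-algebra automorphisms of $\curlL$ are precisely the $\Qp$-solutions of the $\integers$-defined polynomial system encoding bracket preservation. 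Everything else — the Smith-normal-form index computation, the bijection with cosets, and the coset-by-coset unfolding of the Haar integral — is a routine local-field calculation, so there is no substantial obstacle.
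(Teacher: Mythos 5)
The paper does not actually prove this proposition — it is quoted directly from Grunewald–Segal–Smith, and the text simply cites \cite[Proposition~4.2]{GSS} without reproducing the argument. Your proof is correct and is precisely the standard argument from that source: identify $\Aut\curlL$ with the $\Qp$-points of the algebraic automorphism group, set up the bijection between $\curlGZp\backslash\curlGplus$ and the set of sublattices $H\leq L_p$ with $H\cong L_p$ (noting that $L_p\cdot g_1=L_p\cdot g_2$ iff $g_2g_1^{-1}\in\curlGZp$, which gives left cosets of $\curlGZp$), compute the index by Smith normal form, and unfold the integral using right-invariance of $\mucurlG$ and the normalization $\mucurlG(\curlGZp)=1$. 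All the steps check out; in particular the constancy of $|\det\rho(\cdot)|^s$ on cosets $\curlGZp g$ and the fact that right-invariance gives each such coset measure $1$ are exactly what is needed. There is no gap.
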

Incidentally, Proposition~\ref{proposition proisomorphic lie alg alg gp corr} is of independent interest as it generalizes to counting isomorphic subrings in Lie rings $L\otimes \localringofintegers$, in which case the results of the present paper apply. A result of Bryant and Groves \cite[Theorem~A]{BryantandGroves}
implies that every algebraic group defined over $\rationals$
together with every possible faithful $\rationals$-rational
representation can be realized as the quotient of the automorphism group of some nilpotent Lie algebra by the group of $\mathrm{IA}$-automorphisms (these are the automorphisms acting trivially on the abelianisation of the Lie algebra), together with a natural representation. Unfortunately this does not give an immediate answer to Question~\ref{conjecture proisom zeta fn without fn eq}. Even in cases where the group of $\mathrm{IA}$-automorphisms coincides with the unipotent radical, we do not yet have sufficient understanding of the effect of the latter on the integral to extend our counterexamples to proisomorphic zeta functions of $\tgroup$-groups using the approach of \cite{BryantandGroves}.

Our results contribute to a broader picture of zeta functions of $\tgroup$-groups. Local functional equations are known to be satisfied for all subgroup counting zeta functions \cite{vollfneq} and for normal subgroup counting zeta functions of groups of class at most $2$ \cite{vollfneq, paajanenfneq}, while counterexamples are known in the normal subgroup case already in class $3$ \cite{duSW}. On the other hand, there are examples of $\tgroup$-groups whose local zeta functions of both subgroup and normal subgroup type are non-uniform \cite{duS-ec}. These examples corroborated an analysis of the zeta functions $\zeta^*_{\Gamma,p}(s)$ ($*\in\{\leq,\normal\}$) in \cite{duSG-annals} which established a theoretical link between counting subgroups of a $\tgroup$-group $\Gamma$ and counting $\Fp$-points on an algebraic variety associated to $\Gamma$. However, this analysis was not carried out for proisomorphic zeta functions, and it remains to be seen whether they too can exhibit non-uniform behaviour. In view of the dichotomy between $\zeta^\leq_{\Gamma,p}(s)$ and $\zeta^\normal_{\Gamma,p}(s)$, it is an interesting open question whether proisomorphic zeta functions satisfy local functional equations in general. If Question~\ref{conjecture proisom zeta fn without fn eq} is answered in the affirmative, it will be of great interest to characterize those $\tgroup$-groups whose local proisomorphic zeta functions do not satisfy functional equations. On the other hand, Theorem~\ref{main theorem} extends the known classes of algebraic groups and representations for which the local zeta functions are uniform and do satisfy functional equations. In view of Propositions~\ref{proposition proisomorphic group lie alg corr} and \ref{proposition proisomorphic lie alg alg gp corr}, this implies corresponding results for a larger class of $\tgroup$-groups than previously known (cf. \cite[Theorem B]{duSL}).

The paper is laid out as follows. In Section~\ref{reciprocity section} we consider generating functions over polyhedral cones similar to those considered by Stanley \cite{Stanley}. The chief difference is that in our context the summand varies within a finite number of cells into which the cone is subdivided. We combine a new combinatorial result with techniques of Stanley \cite{Stanley} and Igusa \cite{Igusa} to show that if the cone is simple then a certain sum of generating functions weighted by elements of an abstract Weyl group satisfies a reciprocity property. In Section~\ref{reduction section} we explain how the work of du Sautoy and Lubotzky \cite{duSL} can be extended to obtain more delicate control over the form of the reduced integral expression for $\ZcurlGrhops$ obtained by restricting the domain of integration to a connected reductive subgroup. In Section~\ref{bruhat section} we use a $\maxideal$-adic Bruhat decomposition due to Iwahori and Matsumoto to obtain a combinatorial expression for the zeta function as a sum of generating functions over a polyhedral cone, utilizing methods of \cite{Igusa} and \cite{duSL}. We are then able to deduce Theorem~\ref{main theorem} in Section~\ref{maintheorem section}, utilizing the results of Section~\ref{reciprocity section}. In Section~\ref{counterexample section} we give examples of local zeta functions of algebraic groups that do not satisfy functional equations.\\

\emph{Acknowledgements.} I wish to thank Marcus du Sautoy, my
doctoral supervisor, for many inspiring conversations. I am also
indebted to Nir Avni, Benjamin Klopsch, Uri Onn, Pirita Paajanen, Ilya Tyomkin and Christopher
Voll for valuable discussions and suggestions. In particular, I wish
to acknowledge Nir Avni for contributing to the construction of the
examples in Section~\ref{counterexample section} and to the proof of
Lemma~\ref{theta constant on double cosets lemma}. I am grateful to
my father, Peter Berman, and to Christopher Voll for their feedback on the presentation. 
During the course of this work I obtained support from several sources. I wish to thank the Rhodes Trust for a scholarship, Alex Lubotzky for a postdoctoral grant, and the Lady Davis Fellowship Trust for a Golda Meir postdoctoral fellowship. For financial support, I am grateful to Merton College and the Institute of Mathematics, University of Oxford, and to the Einstein Institute of Mathematics, The Hebrew University of Jerusalem. I also thank all of them for their hospitality. Finally, I wish to thank the referees for their insightful and helpful remarks.\\

We use the following notation:
\begindm
\begin{array}{l|l}
\naturals & \mbox{the set of positive integers}\\
\naturalszero & \mbox{the set of non-negative integers}\\
\Qp & \mbox{the set of $p$-adic numbers}\\
\Zp & \mbox{the set of $p$-adic integers}\\
\protect{[n]} & \mbox{the set}\ \ \{1,\ldots,n\}\\
\Sym(n)&\mbox{the symmetric group on \{1,\ldots,n\}} \\
|.|&\mbox{the $\maxideal$-adic absolute value} \\
v(x)\ & \mbox{the valuation of}\ x,\ \mbox{for}\ x\in \localfield \\
\end{array}
\enddm

\section{Reciprocities of generating functions over cones}\label{reciprocity section}

We begin by fixing our terminology, most of which is standard. The
dimension of a (non-empty) subset of $\reals^m$ is the dimension of
the subspace spanned by it. A (linear) hyperplane $H$ is a set of
the form $\{v\in\reals^m\suchthat v.w=0\}$, where $w$ is some fixed
vector in $\reals^m$. We will always assume such a vector has been
fixed even when this is not made explicit. We set $H^>:=\{v\in
\reals^m\suchthat v.w>0\}$ and similarly for $H^\geq$, $H^<$,
$H^\leq$, and we write $H^0=H$. We call a hyperplane rational if the
vector defining it has rational coordinates with respect to the
standard basis for $\reals^m$. We call a subset of $\reals^m$ a cone
if it is an intersection of closed half-spaces; that is, sets of the
form $H_i^\geq$ for some hyperplanes $H_i$. A cone is pointed if it
contains no lines and it is rational if it may be defined by rational hyperplanes. A cone is simplicial if it contains a finite subset
$S$ such that every point of the cone is uniquely expressible as a
non-negative $\reals$-linear combination of elements of $S$. It is simple if it is pointed and the set of lattice points contained in it is freely generated as a commutative monoid (in particular, a simple cone is simplicial). Finally, we
define a polyhedral cell complex. This is a cone $\curlC$ and a
family $\cellsofcomplex$ of cells defined by two finite collections
of hyperplanes -- bounding hyperplanes $\{B_i\}_{i\in M}$ and internal
hyperplanes $\{H_i\}_{i\in K}$. The cone of the complex is defined
to be 
\begindm
\curlC:=\bigcap_{i\in M}B_i^{\geq}.
\enddm
The cells are defined to be sets of the form
\begindm
\bigcap_{i\in M}B_i^{s_i}\cap\bigcap_{j\in K}H_j^{t_j}
\enddm
where each $s_i\in\{0,>\}$ and each $t_j\in\{0,<,>\}$. Thus every
cell is open in its support and is contained in $\curlC$; also, $\curlC$ is a disjoint union of the cells. For a subset $X$ of $\reals^m$, let $\overline{X}$ denote its closure with respect to the standard Euclidean metric. Given two
cells $F_1$, $F_2\in \cellsofcomplex$, we define $F_1$ to be a face
of $F_2$, written $F_1\leq F_2$, if $\overline{F_1}\subseteq
\overline{F_2}$. For each $I\subseteq M$
set
\begindm
{\displaystyle \curlC_I:=\bigcap_{i\in I}B^{>}_i\cap\bigcap_{j\in M\backslash
I}B^{\geq}_j.}
\enddm
For instance, $\curlC_{\emptyset}=\curlC$ and $\curlC_M=\mathrm{Int}(\curlC)$ (the interior taken with respect to
the support of $\curlC$). Set
\begindm
\cellsofcomplex_I:=\{F\in\cellsofcomplex\suchthat F\subseteq
\curlC_I\}.
\enddm
If $\bolde\in\curlC$, denote by $F_{\bolde}$ the unique cell in the
complex containing $\bolde$.

\begin{definition}\label{replaceable by constant definition}
We call a function $\loclinfnonZlattice:\cellsofcomplex\rightarrow
\integers^{m}$ \emph{piecewise constant} on the complex if
for each cell $F$ there exists $C_F\in \integers^{m}$ such that
$C_F.\bolde=\loclinfnonZlattice(F_{\bolde}).\bolde$ for all
$\bolde\in \overline{F}\cap\integers^{m}$.
\end{definition}

\begin{definition}\label{generating function definition}
For $Y\subseteq \curlC$, $\loclinfnonZlattice:\cellsofcomplex\rightarrow
\integers^{m}$, $A,B\in\integers^m$, $q$ a prime power and $s$ a complex variable put
\begin{eqnarray*}
\genfn_{Y,A,B,\gamma}(q,q^{-s})&:=&\sum_{\bolde\in
Y\cap\integers^{m}}q^{(A+\loclinpiece{\bolde}).\bolde-(B.\bolde)s}.
\end{eqnarray*}
\end{definition}
Usually we will simply write this as $\genfn_{Y}(q,q^{-s})$. We can now state our main combinatorial result:
\begin{theorem}\label{reciprocity theorem}
Let $(\curlC, \cellsofcomplex)$ be a polyhedral cell complex in
$\reals^m$ defined by rational hyperplanes, and suppose $\curlC$ is
a simplicial cone defined by $m$ bounding hyperplanes with $\dim\curlC=m$. Let
$\loclinfnonZlattice:\cellsofcomplex\to\integers^m$ be a piecewise constant function on the complex and let $A,B\in\integers^m$.
Suppose that $B.\bolde>0$ for all $0\neq\bolde\in \curlC$. Then for each $I\subseteq [m]$,
$\genfn_{\curlC_I}(q,q^{-s})$ is a rational function in $q, q^{-s}$. Furthermore,
\begin{eqnarray}\label{equation reciprocity of generating fn}
\genfn_{\curlC_I}(q,q^{-s})|_{q\to
q^{-1}}&=&(-1)^{m}\genfn_{\curlC_{[m]\backslash I}}(q,q^{-s}).
\end{eqnarray}
\end{theorem}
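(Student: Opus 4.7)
The strategy is to extend Stanley's reciprocity \cite[Theorem~4.6.14]{Stanley} by combining an inclusion-exclusion on the bounding hyperplanes with a cone-level reciprocity adapted to the piecewise-constant exponent $\loclinfnonZlattice$. First I would carry out an inclusion-exclusion: since $\curlC_I = \bigcap_{i\in I} B_i^{>} \cap \curlC$ and, on $\curlC$, $B_i^{>} = \curlC \setminus (\curlC \cap B_i^{0})$, expanding $\prod_{i\in I}(1-\mathbf{1}_{B_i^{0}})$ yields
\begin{equation*}
\genfn_{\curlC_I}(q,q^{-s}) = \sum_{J\subseteq I}(-1)^{|J|}\genfn^{J}(q,q^{-s}),
\end{equation*}
where $\genfn^{J}$ is the analogous generating function over the face $\curlC^{J}:=\curlC\cap\bigcap_{j\in J}B_j^{0}$, carrying the same piecewise-constant exponent. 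Rationality of $\genfn_{\curlC_I}$ then reduces to rationality of each $\genfn^{J}$.

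Second, I would establish a cone-wise reciprocity
\begin{equation*}
\genfn^{J}(q,q^{-s})\big|_{q\to q^{-1}} = (-1)^{m-|J|}\genfn^{J,\circ}(q,q^{-s}),
\end{equation*}
where $\genfn^{J,\circ}$ sums over the relative interior of $\curlC^{J}$. When there are no internal hyperplanes and $\loclinfnonZlattice=0$ this is exactly Stanley's theorem, since $\curlC^{J}$ is a simplicial cone of dimension $m-|J|$. To accommodate the internal hyperplanes and the piecewise-constant $\loclinfnonZlattice$, I would decompose $\genfn^{J}$ cell by cell, compute each $\genfn_{F}$ as a rational function using the simplicity of $\curlC$ (which makes $\curlC\cap\integers^m$ a free commutative monoid on the generators $v_1,\ldots,v_m$, with positivity $B\cdot\bolde>0$ on $\curlC\setminus\{0\}$ providing convergence of the geometric series), and run the algebraic manipulations $1/(1-u^{-1})=-u/(1-u)$ from Stanley's proof on each cell. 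Combining the two steps gives
\begin{equation*}
\genfn_{\curlC_I}\big|_{q\to q^{-1}} = (-1)^m\sum_{J\subseteq I}\genfn^{J,\circ} = (-1)^m\genfn_{\curlC_{[m]\setminus I}},
\end{equation*}
where the last equality is a direct enumeration: each lattice point of $\curlC_{[m]\setminus I}$ lies in the relative interior of a unique face $\curlC^{J}$ with $J=\{i:x_i(\bolde)=0\}\subseteq I$.

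The main obstacle I anticipate is the cone-wise reciprocity in the presence of internal cell structure. Stanley's classical argument treats a single simplicial cone with a uniform monomial exponent, whereas here each $\curlC^{J}$ is subdivided into cells on which the shift $C_F$ differs. The bijection between boundary and interior lattice points in Stanley's proof uses a shift by $v_1+\cdots+v_m$; the analogous construction here requires identifying a \emph{minimal vector} in each cell (adapting \cite[Corollary~4.6.16]{Stanley}), a device which the introduction flags as the combinatorial novelty of this paper. Once the minimal-vector bijection is in place on each cell of $\cellsofcomplex$, the cone-wise reciprocity, and hence the theorem, follow by summation.
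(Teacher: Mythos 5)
Your overall skeleton is sound and genuinely different from the paper's route, which does no inclusion--exclusion on the bounding hyperplanes: the paper decomposes $\curlC_I$ directly into cells of $\curlF$, proves a cell-wise inversion (Lemma~\ref{inversion of a cell lemma}, $\genfn_{F_0}|_{q\to q^{-1}}=(-1)^{\dim F_0}\sum_{F\leq F_0}\genfn_F$) by triangulating $\overline{F_0}$ and running Stanley's geometric-series argument, and then recombines via the alternating-sign identity of Proposition~\ref{dimension counting proposition}. Your first step and the final bookkeeping (the disjoint union $\curlC_{[m]\setminus I}\cap\integers^m=\coprod_{J\subseteq I}\mathrm{relint}(\curlC^J)\cap\integers^m$ and the sign $(-1)^{|J|}(-1)^{m-|J|}=(-1)^m$) are correct.

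The gap is in Step~2. You assert that once a cell-level reciprocity is in place, the face-level reciprocity $\genfn^J|_{q\to q^{-1}}=(-1)^{m-|J|}\genfn^{J,\circ}$ ``follows by summation,'' but it does not. Summing the cell-level inversion over the cells $F\subseteq\curlC^J$ gives
\begin{displaymath}
\genfn^J|_{q\to q^{-1}}=\sum_{F\subseteq\curlC^J}(-1)^{\dim F}\sum_{F'\leq F}\genfn_{F'}=\sum_{F'}\genfn_{F'}\Bigl(\sum_{F\subseteq\curlC^J,\,F\geq F'}(-1)^{\dim F}\Bigr),
\end{displaymath}
and the inner alternating sum is not obviously $0$ on boundary cells and $(-1)^{m-|J|}$ on interior cells. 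Establishing that is exactly the content of Proposition~\ref{dimension counting proposition} (applied to the subcomplex induced on $\curlC^J$), a dimension-counting identity proved in the paper by induction on the number of internal hyperplanes. Your proposal omits this combinatorial step entirely; it is the genuinely new ingredient for Theorem~\ref{reciprocity theorem}, not the ``minimal vector'' you cite. The minimal-vector device (the translation $\boldanought$ in the spirit of Stanley's Corollary~4.6.16) enters only in Corollary~\ref{fn eq corollary}, where one needs $\curlC_{[m]\backslash I}\cap\integers^m=\boldanought+\curlC_{[l]\backslash I}\cap\integers^m$; it plays no role in the reciprocity of a single $\genfn_{\curlC_I}$. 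Put differently, your Step~2 for each $J$ is the $I=\emptyset$ case of the theorem applied to the simplicial subcone $\curlC^J$, and you cannot take that case for granted: it already carries the full difficulty introduced by the internal hyperplanes and the piecewise-constant exponent.
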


Note that Theorem~\ref{reciprocity theorem} reduces (essentially) to a special case of \cite[Theorem~4.6.14]{Stanley} if $I=\emptyset$ and there are no internal hyperplanes. On the other hand, if $I\notin \{\emptyset, [m]\}$, the reciprocity result is best possible in the sense that if $\curlC$ is not simplicial, (\ref{equation reciprocity of generating fn}) will not necessarily hold (it is easy to construct examples).

The proof of the theorem depends on the following two results. The first of these is new, as far as we are aware, while the second is essentially a restatement of \cite[Theorem~4.6.14]{Stanley} in a form suitable to our context.

\begin{proposition}\label{dimension counting proposition}
Take $(\curlC, \curlF)$ as in the hypothesis of Theorem~\ref{reciprocity theorem}. Then for each
$F_0\in \curlF$ and $I\subseteq [m]$ we have
\begindm
\sum_{F\in\curlF_I\atop F\geq F_0}(-1)^{\dim F}=\left\{
\begin{array}{ll}
0&\mbox{if}\ \ F_0\notin \curlF_{[m]\backslash I}\\
(-1)^{m}&\mbox{if}\ \ F_0\in \curlF_{[m]\backslash I}.
\end{array}
\right.
\enddm
\end{proposition}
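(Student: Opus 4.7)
The plan is to group the cells summed over by their sets of bounding-zero indices, identify each group with a non-empty open convex region inside a coordinate subspace, and apply an Euler-type identity for hyperplane arrangements in open convex sets. First I would fix coordinates so that $\curlC = \realsnonneg^m$ with $B_i = \{x_i = 0\}$, which is possible since $\curlC$ is a simplicial cone of dimension $m$ defined by $m$ bounding hyperplanes. Each cell $F \in \curlF$ is then determined by a sign vector on the bounding and internal hyperplanes; writing $Z_B(F)$ and $Z_H(F)$ for its bounding and internal zero sets and setting $A_0 := Z_B(F_0)$, $V_0 := Z_H(F_0)$, a direct check of the face relation $F \geq F_0 \iff \overline{F_0} \subseteq \overline{F}$ shows that $F \geq F_0$ is equivalent to $Z_B(F) \subseteq A_0$, $Z_H(F) \subseteq V_0$, together with agreement of the signs on the non-zero entries of $F_0$. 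The condition $F \in \curlF_I$ further requires $Z_B(F) \cap I = \emptyset$, so every cell $F$ contributing to the sum satisfies $A := Z_B(F) \subseteq A_0 \setminus I$.

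I would then group cells by the value of $A$: the union of those with $Z_B(F) = A$, $F \geq F_0$, $F \in \curlF_I$ is precisely
\begin{displaymath}
X_A := \{x \in \reals^m : x_i = 0 \text{ for } i \in A;\ x_i > 0 \text{ for } i \notin A;\ \mathrm{sign}(x \cdot w_j) = t_j(F_0) \text{ for } j \notin V_0\},
\end{displaymath}
where $w_j$ denotes the normal defining $H_j$ and $t_j(F_0)$ its sign on $F_0$. This is an open convex subset of the $(m - |A|)$-dimensional linear subspace $B_A := \bigcap_{i \in A} B_i$. I would verify $X_A$ is non-empty for every admissible $A$ by taking $p$ in the relative interior of $F_0$ and, when $A \neq A_0$, perturbing to $p + \epsilon \sum_{i \in A_0 \setminus A} e_i$ for sufficiently small $\epsilon > 0$; the strict internal-sign conditions survive this perturbation since only finitely many of them are involved.

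The combinatorial engine of the proof is the lemma that for any open convex set $V \subseteq \reals^n$ subdivided by a finite arrangement of linear hyperplanes, $\sum_F (-1)^{\dim F} = (-1)^n$, summed over the cells of the resulting partition. I would prove this by induction on the number of hyperplanes: the base case of no hyperplanes gives a single cell of dimension $n$, and adding one new hyperplane either leaves a given cell untouched or splits it into three pieces of dimensions $d, d-1, d$ whose signed sum is again $(-1)^d$. Applying this to $X_A$ subdivided by the internal hyperplanes $H_j$, $j \in V_0$, yields $\sum_{F \subseteq X_A} (-1)^{\dim F} = (-1)^{m - |A|}$.

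Summing over $A \subseteq A_0 \setminus I$ then gives
\begin{displaymath}
\sum_{F \in \curlF_I,\ F \geq F_0} (-1)^{\dim F} = \sum_{A \subseteq A_0 \setminus I} (-1)^{m - |A|} = (-1)^m \sum_{A \subseteq A_0 \setminus I} (-1)^{|A|},
\end{displaymath}
which by the binomial identity vanishes unless $A_0 \setminus I = \emptyset$, in which case it equals $(-1)^m$. Since $A_0 \setminus I = \emptyset$ is equivalent to $A_0 \subseteq I$, and hence to $F_0 \in \curlF_{[m] \setminus I}$, the claimed dichotomy follows. The main obstacle I expect is the non-emptiness verification of the fibers $X_A$ together with the isolation of the Euler identity above as a self-contained combinatorial tool; the argument is essentially insensitive to possible coincidences of internal hyperplanes with bounding ones or with each other.
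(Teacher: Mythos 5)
Your argument is correct but organizes the induction differently from the paper. The paper inducts on the number $|K|$ of internal hyperplanes: the base case $|K|=0$ is a direct count over subsets of $[m]$ using simpliciality of $\curlC$, and the inductive step shows the full sum is unchanged when one internal hyperplane is added, via a case analysis on the position of $F_0$ relative to the new hyperplane. You instead coordinatize $\curlC$ as $\realsnonneg^m$, stratify the contributing cells by their bounding-zero set $A\subseteq A_0\setminus I$, evaluate each stratum $X_A$ by a standalone Euler-type identity for an open convex region cut by a hyperplane arrangement, and close with the binomial fact that $\sum_{A\subseteq A_0\setminus I}(-1)^{|A|}$ vanishes unless $A_0\subseteq I$, i.e. unless $F_0\in\curlF_{[m]\backslash I}$. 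The key local computation, namely that a hyperplane crossing a $d$-dimensional relatively open cell yields three nonempty pieces of dimensions $d$, $d-1$, $d$ with signed sum $(-1)^d$, is common to both proofs; the paper carries this out with $F_0$ and the refined complex in play and must track how the face relation $F_0\leq F$ interacts with the refinement, whereas you extract it as a reusable lemma about open convex sets and thereby cleanly decouple the bounding hyperplanes from the internal ones. Your non-emptiness check for the strata $X_A$ (perturbing a point of $F_0$ along $\sum_{i\in A_0\setminus A}e_i$) is the right one and is where the hypothesis $\dim\curlC=m$ with exactly $m$ bounding hyperplanes enters. You also correctly observe that degenerate configurations, such as an internal hyperplane coinciding with a bounding one or missing $X_A$ entirely, are absorbed harmlessly since such a hyperplane does not change the cell count of $X_A$. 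Both arguments implicitly restrict to nonempty cells (as does the statement, since the face relation $\overline{F_1}\subseteq\overline{F_2}$ degenerates for empty cells), which is the intended reading.
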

\begin{proof}
We prove this by induction on the number $|K|$ of internal
hyperplanes. We start with the case $|K|=0$. If $F_0\in
\curlF_{[m]\backslash I}$ then $\{F\in \curlF_I\suchthat F_0\leq
F\}=\curlF_{[m]}$, which consists of a single cell of dimension $m$,
so we obtain the required expression. If $F_0\notin
\curlF_{[m]\backslash I}$, put $I_0=\max\{J\subseteq [m]\backslash
I\suchthat F_0\subseteq \curlC_{J}\}$ and note that
$\emptyset\subseteq I_0\subsetneq [m]\backslash I$. Then, since
$\curlC$ is simplicial, we have
\begin{eqnarray*}
\sum_{F\in\curlF_I\atop F\geq F_0}(-1)^{\dim F}&=&\sum_{J\subseteq
[m]\backslash (I\cup I_0)}(-1)^{m-|J|}\\
&=&0.
\end{eqnarray*}
Suppose now that $(\curlC, \curlF)$ is a cell complex with
internal hyperplanes $\{H_i\}_{i\in K}$. Let $H$ be a
hyperplane and let $(\curlC,\curlFext)$ be the cell complex formed
by $\curlC$ with internal hyperplanes $\{H_i\}_{i\in K}\cup \{H\}$.
For each $J\subseteq [m]$ define $\curlF_J$ (respectively,
$\curlFext_J$) as before. Inductively, we assume that the result
holds for $(\curlC, \curlF)$. Note that $\curlFext$ consists of all
non-empty cells of the form $F\cap H^*$ where $F\in \curlF$ and
$*\in\{0,<,>\}$. Furthermore, for each $*\in\{0,<,>\}$, $F\cap
H^*\in \curlF^2_{[m]\backslash I}$ if and only if $F\in
\curlF_{[m]\backslash I}$ and $F\cap H^*\neq\emptyset$. Thus it is
sufficient to show that if $F_0\in \curlF$, $*\in\{0,<,>\}$ and
$F_0\cap H^*\neq\emptyset$, the following holds:
\begin{equation}\label{invariance of sum of signs equation 1}
\sum_{{F\in \curlF_I}\atop{F_0\leq F}}(-1)^{\dim{F}}=\sum_{{F'\in
\curlFext_I}\atop{F_0\cap H^*\leq F'}}(-1)^{\dim{F'}}.
\end{equation}
If $F_0\cap H^<\neq\emptyset$ then for each $F\geq F_0$ in
$\curlF_I$ we have $F\cap H^<\neq\emptyset$ and hence $\dim F = \dim
F\cap H^<$. Thus
\begindm
\{F'\in \curlFext_I\suchthat F'\geq F_0\cap H^<\}=\{F\cap
H^<\suchthat F\geq F_0,\ F\in \curlF_I\}
\enddm
and (\ref{invariance of sum of signs equation 1}) follows in this
case. The case $F_0\cap H^>\neq\emptyset$ is similar. Suppose
that $F_0\cap H\neq\emptyset$. If $F'\geq F_0\cap H$ in
$\curlFext_I$ then $F'=F\cap H^*$ for some $F\geq F_0$ in $\curlF_I$
and $*\in\{0,<,>\}$.
In order to establish (\ref{invariance of sum
of signs equation 1}) it is sufficient to show that
\begin{equation}\label{invariance of sum of signs equation 2}
\sum_{{*\in\{0,<,>\}}\atop{F\cap H^*\neq\emptyset}} (-1)^{\dim{F\cap
H^*}}=(-1)^{\dim{F}}.
\end{equation}
Suppose that there exists $*\in\{0,<,>\}$ such that $F\subseteq
H^*$. Then (\ref{invariance of sum of signs equation 2}) follows
immediately. The alternative possibility is that $F\cap
H\neq\emptyset$ and $F\nsubseteq H$. In this case $F\cap
H^>\neq\emptyset$ and $F\cap H^<\neq\emptyset$. We have $\dim F\cap
H^>=\dim F\cap H^< = \dim(F\cap H)+1=\dim F$, and (\ref{invariance
of sum of signs equation 2}) follows.
\end{proof}

\begin{lemma}\label{inversion of a cell lemma}
Take $(\curlC,\curlF)$, $A$, $B$ and $\gamma$ as in the hypothesis of Theorem~\ref{reciprocity theorem}. Then
for each $F_0\in \curlF$, $\genfn_{F_0}(q,q^{-s})$ is a rational function in $q$ and
$q^{-s}$. Furthermore,
\begin{eqnarray}\label{reciprocity for a cell equation}
\genfn_{F_0}(q,q^{-s})|_{\pgoestopinverse}&=&(-1)^{\dim
F_0}\sum_{F\leq F_0}\genfn_{F}(q,q^{-s}).
\end{eqnarray}
\end{lemma}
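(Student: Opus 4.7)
The plan is to reduce the lemma to Stanley's reciprocity theorem \cite[Theorem~4.6.14]{Stanley} for pointed rational cones, exploiting the piecewise constancy of $\gamma$ to linearize the exponent on the closure of each cell. First, I would observe that Definition~\ref{replaceable by constant definition} gives $\gamma(F_{\bolde})\cdot\bolde=C_{F_0}\cdot\bolde$ for every $\bolde\in\overline{F_0}\cap\integers^m$. Consequently, if one sets $z_i:=q^{(A+C_{F_0})_i}(q^{-s})^{B_i}$ for $i\in[m]$ and writes $z^\bolde:=\prod_i z_i^{e_i}$, then Definition~\ref{generating function definition} takes the standard monomial form $\genfn_{F_0}(q,q^{-s})=\sum_{\bolde\in F_0\cap\integers^m}z^\bolde$, and analogously one obtains a monomial generating function $S_0:=\sum_{\bolde\in\overline{F_0}\cap\integers^m}z^\bolde$ over the closure.

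Next, I would verify that $\overline{F_0}$ is a pointed rational cone of dimension $\dim F_0$ with relative interior exactly $F_0$. Rationality follows from the standing hypothesis that all hyperplanes are rational; pointedness follows from $\overline{F_0}\subseteq\curlC$ together with the fact that simplicial cones are pointed; and the identification $F_0=\mathrm{relint}\,\overline{F_0}$ is built into the cell complex structure, since the paper's convention is that each cell is open in its support. Stanley's reciprocity then yields rationality and the identity
\begin{equation*}
S_0|_{z\mapsto z^{-1}}=(-1)^{\dim F_0}\genfn_{F_0}(q,q^{-s}).
\end{equation*}
Under the paper's conventions, the substitution $q\to q^{-1}$ simultaneously sends $q^{-s}$ to $q^s$, hence sends each $z_i$ to $z_i^{-1}$; it is therefore an involution on the ambient ring of rational functions, and applying it to both sides of the displayed identity and rearranging gives $\genfn_{F_0}(q,q^{-s})|_{\pgoestopinverse}=(-1)^{\dim F_0}S_0$.

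To conclude, I would decompose the closure as a disjoint union of relative interiors of faces, $\overline{F_0}=\bigsqcup_{F\leq F_0}F$, and use the identity $C_{F_0}\cdot\bolde=C_F\cdot\bolde$ valid on $F\cap\integers^m$ (which follows from applying Definition~\ref{replaceable by constant definition} to both $F_0$ and to $F\leq F_0$, so that the same monomial $z^\bolde$ represents the summand on either side) to rewrite $S_0=\sum_{F\leq F_0}\genfn_F(q,q^{-s})$, giving precisely \eqref{reciprocity for a cell equation}. The point requiring care, rather than a substantive obstacle, is that $\overline{F_0}$ need not itself be simplicial even though $\curlC$ is: internal hyperplanes can slice $\curlC$ into cells whose closures have more extreme rays than their dimension. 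One therefore needs Stanley's reciprocity in the generality that holds for arbitrary pointed rational cones, not merely the simplicial case with its explicit product formula.
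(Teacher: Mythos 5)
Your proof is correct and rests on the same two ideas as the paper's: linearize the exponent via piecewise constancy of $\gamma$ on $\overline{F_0}\cap\integers^m$ (so the generating function becomes a genuine monomial series), then invoke Stanley's reciprocity for pointed rational cones. The difference is one of economy rather than substance: the paper re-derives the simplicial case by an explicit fundamental-domain calculation (matching Stanley's Lemma~4.6.13) and then triangulates $\overline{F_0}$ following the proof of Stanley's Theorem~4.6.14, whereas you simply cite Theorem~4.6.14 as a black box applied to the pointed rational cone $\overline{F_0}$ with interior $F_0$, and then rearrange using the involutivity of $q\mapsto q^{-1}$ (equivalently $z_i\mapsto z_i^{-1}$). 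This is cleaner, and you correctly flag the one point where the black-box version is actually needed, namely that $\overline{F_0}$ need not be simplicial. The only step you pass over lightly, which the paper makes explicit, is that the specialization $z_i=q^{(A+C_{F_0})_i}(q^{-s})^{B_i}$ of Stanley's identity of rational functions is legitimate because $B\cdot u>0$ for every nonzero lattice point $u$ of $\curlC$, so no denominator of the form $1-z^u$ collapses to zero; this is a one-line remark but is the role played by the hypothesis $B\cdot\bolde>0$, and is worth stating.
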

\begin{proof}
Since $\gamma$ is piecewise constant, there exists
$C_{F_0}\in\integers^{m}$ such that ${C_{F_0}.\bolde=\loclinfnonZlattice(\cellassignfn_{\bolde}).\bolde}$
for all $\bolde\in \overline{F_0}\cap\integers^{m}$. We begin by
proving the result in the case that $\overline{F_0}$ is simplicial. In this
case, there exist $u_1,\ldots,u_k\in\integers^{m}$ (where
$k=\dim{F_0}$) such that each element of $F_0$ is uniquely
expressible as a positive real linear combination of these $k$
vectors. Denote by $\funddom{1}$ the set $\integers^{m}\cap
\sum_{i=1}^k (0,1]u_i$ and by $\funddom{0}$ the set
$\integers^{m}\cap \sum_{i=1}^k [0,1)u_i$. Then
\begindm
\integers^{m}\cap
F_0=\coprod_{{\bolde\in\funddom{1}}\atop{\boldt\in\naturalszero^{k}}}\left(\bolde+\sum_{i=1}^k
t_i u_i\right)
\enddm
so
\begindisplayarray{llll}
\genfn_{F_0}(q,q^{-s})&=&\sum_{\bolde\in \integers^{m}\cap F_0}q^{(A+\loclinpiece{0}).\bolde -(B.\bolde) s}&\\
&=&\left(\sum_{{\bolde}\in\funddom{1}}q^{(A+C_{F_0}).{\bolde} -(B.{\bolde})s}\right)\prod_{i=1}^k \frac{1}{1-q^{(A+C_{F_0}).u_i-(B.u_i)s}}.&\\
\enddisplayarray
Note that for each $u_i$, $i=1,\ldots,k$ we have $B.u_i>0$, so the sum converges for suitable $s$. Thus
\begin{eqnarray*}
\genfn_{F_0}(q,q^{-s})|_{\pgoestopinverse}&=&(-1)^{\dim{F_0}}\left(\sum_{{\bolde}\in\funddom{0}}q^{(A+C_{F_0}).{\bolde} -(B.{\bolde})s}\right)\prod_{i=1}^k \frac{1}{1-q^{(A+C_{F_0}).u_i -(B.u_i)s}}\\
&=&(-1)^{\dim F_0}\sum_{\bolde\in \integers^{m}\cap
\overline{F_0}}q^{(A+\loclinfnonZlattice(F_{\bolde})).\bolde -(B.\bolde)s}\\
&=&(-1)^{\dim F_0}\sum_{F\leq F_0}\genfn_{F}(q,q^{-s}).
\end{eqnarray*}

We now consider the general case. Since $\curlC$ is a pointed cone, so is $\overline{F_0}$. By \cite[Lemma 4.6.1]{Stanley},
there exists a triangulation of $\overline{F_0}$, namely an
expression for $\overline{F_0}$ as a union of simplicial cones closed
under intersection and under taking faces. Comparing (\ref{reciprocity for a cell equation}) (for $F_0$ simplicial) to
Stanley's Lemma~4.6.13, and noting that
$\overline{F_0}=\disjointunion_{F\leq F_0}F$, our result now follows
from the proof of Stanley's Theorem~4.6.14.
\end{proof}

We have
\begin{align*}
\genfn_{\curlC_I}(q,q^{-s})|_{q\rightarrow q^{-1}}&=\sum_{F\in
\curlF_I}\genfn_{F}(q,q^{-s})|_{\pgoestopinverse}\\
&=\sum_{F'\in
\curlF_I}\sum_{F\leq F'}(-1)^{\dim F'}\genfn_{F}(q,q^{-s})&&\text{(by Lemma~\ref{inversion of a cell lemma})}\\
&=\sum_{F\in\curlF}\genfn_{F}(q,q^{-s})\sum_{{F'\in
\curlF_I}\atop{F\leq F'}}(-1)^{\dim
F'}\\
&=\sum_{F\in\curlF_{[m]\backslash I}}(-1)^{m}\genfn_{F}(q,q^{-s})&&\text{(by Proposition~\ref{dimension
counting proposition})}\\
&=(-1)^{m}\genfn_{\curlC_{{[m]}\backslash I}}(q,q^{-s}).
\end{align*}
This completes the proof of Theorem~\ref{reciprocity theorem}.

Now let $W$ denote the Weyl group of an abstract root system $\Phi$ and let $\alpha_1,\ldots \alpha_l$ be a fundamental system of roots.
Let $\Phi^+, \Phi^-$ denote the sets of positive, negative roots respectively. We assume that $l<m$. In Section~\ref{functional equation subsection}, where we derive our functional equations for the local zeta functions of $\nonredG$, we will need to consider not just a single generating function as in Theorem~\ref{reciprocity theorem}, but a weighted sum over the Weyl group associated to $\nonredG$ of such generating functions. The cone $\curlC$ will defined by $m=l+d$ bounding hyperplanes, $l$ of these corresponding to the fundamental roots, and $d$ corresponding to the rank of the maximal central torus of $\nonredG$. In our weighted sum each summand will be a generating function over a subset of $\curlC$ with certain bounding hyperplanes (corresponding to a subset of the set of fundamental roots) excluded.

For each $w\in W$ set
\begin{eqnarray}\label{equation definition of I_w}
I_w&:=&\{i\in [l]\suchthat\alpha_i\in w(\Phi^{-})\}.
\end{eqnarray}
The weighted sum is as follows.
\begin{definition}
\begin{eqnarray*}
Z(q,q^{-s})&:=&\sum_{w\in
W}q^{-\lambda(w)}\genfn_{\curlC_{I_w}}(q,q^{-s}).
\end{eqnarray*}
\end{definition}
We now adapt the proof of \cite[Theorem~5.9]{duSL} to show that, under specified conditions, Theorem~\ref{reciprocity theorem} implies that $Z(q,q^{-s})$ satisfies a functional equation. This reflects an interplay between a reciprocity of the generating function (Theorem~\ref{reciprocity theorem}) and a symmetry of the Weyl group. This interplay lies at the heart of the functional equation.
\begin{corollary}\label{fn eq corollary}
Suppose there exists a vector $\boldanought\in
\integers^{m}\cap\curlC\cap\bigcap_{i\in K} H_i$ such that for each
$I\subseteq [l]$ we have
\begin{eqnarray}\label{cone tranlation equation}
\curlC_{[m]\backslash
I}\cap\integers^{m}&=&\boldanought+\curlC_{[l]\backslash
I}\cap\integers^{m}.
\end{eqnarray}
Then, under the hypotheses of Theorem~\ref{reciprocity theorem},
\begin{eqnarray*}
Z(q,q^{-s})|_{\pgoestopinverse}&=&(-1)^{m}q^{|\Phi^+|\,+(A+\loclinfnonZlattice(\cellassignfn_{\boldanought})).\boldanought-(B.\boldanought)s}Z(q,q^{-s}).
\end{eqnarray*}
\end{corollary}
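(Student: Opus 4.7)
The plan is to apply Theorem~\ref{reciprocity theorem} term-by-term to $Z(q,q^{-s})$, then re-assemble the result using two further ingredients: the translation hypothesis~(\ref{cone tranlation equation}) to shift each $\curlC_{[m]\setminus I_w}$ back to $\curlC_{[l]\setminus I_w}$, and the Weyl-group involution $w\mapsto ww_0$ (with $w_0$ the longest element) to match what is left against $Z(q,q^{-s})$ itself.

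First I would invert each summand. Since $q^{-\lambda(w)}|_{q\to q^{-1}}=q^{\lambda(w)}$ and Theorem~\ref{reciprocity theorem} gives $\genfn_{\curlC_{I_w}}(q,q^{-s})|_{q\to q^{-1}}=(-1)^{m}\genfn_{\curlC_{[m]\setminus I_w}}(q,q^{-s})$, this produces
\begin{equation*}
Z(q,q^{-s})|_{q\to q^{-1}} \;=\; (-1)^{m}\sum_{w\in W} q^{\lambda(w)}\,\genfn_{\curlC_{[m]\setminus I_w}}(q,q^{-s}).
\end{equation*}

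Next I would establish the key identity: for any $\bolde'\in\curlC_{[l]\setminus I_w}\cap\integers^{m}$ and $\bolde:=\boldanought+\bolde'$,
\begin{equation*}
\gamma(F_\bolde)\cdot\bolde \;=\; \gamma(F_\boldanought)\cdot\boldanought \;+\; \gamma(F_{\bolde'})\cdot\bolde'.
\end{equation*}
This reduces, via piecewise constancy of $\gamma$, to the geometric claim that both $F_\boldanought\leq F_\bolde$ and $F_{\bolde'}\leq F_\bolde$, so that the common vector $C_{F_\bolde}$ may be used on each of $\boldanought$ and $\bolde'$. Both face relations are verified by checking signs: on the internal hyperplanes, $\boldanought\in\bigcap_{i\in K}H_i$ forces $F_{\boldanought+\bolde'}$ to have the same internal sign pattern as $F_{\bolde'}$, while $F_\boldanought$ has all internal signs $0$ and so is a face of any cell with compatible bounding signs; on the bounding hyperplanes, both $\boldanought$ and $\bolde'$ have nonnegative $B_i$-values (since they lie in $\curlC$), so any strict inequality in either is inherited by the sum. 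Combining this identity with~(\ref{cone tranlation equation}) and factoring the $\boldanought$-dependent exponent out of the sum yields
\begin{equation*}
\genfn_{\curlC_{[m]\setminus I_w}}(q,q^{-s}) \;=\; q^{(A+\gamma(F_\boldanought))\cdot\boldanought\,-\,(B\cdot\boldanought)s}\;\genfn_{\curlC_{[l]\setminus I_w}}(q,q^{-s}).
\end{equation*}

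Finally, I would exploit the standard Weyl-group identities $\lambda(ww_0)=|\Phi^+|-\lambda(w)$ and $w_0(\Phi^-)=\Phi^+$; the latter gives $\alpha_i\in ww_0(\Phi^-)$ iff $\alpha_i\in w(\Phi^+)$ iff $\alpha_i\notin w(\Phi^-)$, whence $I_{ww_0}=[l]\setminus I_w$. The substitution $w\mapsto w'=ww_0$, a bijection of $W$, therefore gives
\begin{equation*}
\sum_{w\in W} q^{\lambda(w)}\,\genfn_{\curlC_{[l]\setminus I_w}}(q,q^{-s}) \;=\; q^{|\Phi^+|}\sum_{w'\in W} q^{-\lambda(w')}\,\genfn_{\curlC_{I_{w'}}}(q,q^{-s}) \;=\; q^{|\Phi^+|}\,Z(q,q^{-s}),
\end{equation*}
and assembling the three steps produces the asserted functional equation. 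The main obstacle is the middle step: the additive identity for $\gamma$ is where the hypothesis $\boldanought\in\bigcap_{i\in K}H_i$ (not merely $\boldanought\in\curlC$) becomes essential, as it is precisely this that forces $F_\boldanought$ to be a face of every cell on the same bounding-sign stratum and so legitimises the use of a single $C_{F_\bolde}$ for both $\boldanought$ and $\bolde'$.
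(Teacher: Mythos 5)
Your proposal is correct and follows essentially the same three-step approach as the paper: apply Theorem~\ref{reciprocity theorem} term-by-term, factor out the $\boldanought$-dependent exponent via piecewise constancy of $\gamma$ together with the translation hypothesis~(\ref{cone tranlation equation}), and re-index by $w\mapsto ww_0$ using $\lambda(w)+\lambda(ww_0)=|\Phi^+|$ and $[l]\setminus I_w = I_{ww_0}$. The only small variation is in the middle step: you justify the additivity of the $\gamma$-contribution by showing directly that both $F_\boldanought$ and $F_{\bolde'}$ are faces of $F_{\bolde}$ (so that the single constant $C_{F_\bolde}$ applies to all three vectors), whereas the paper invokes chambers --- closures of maximal cells, each of which contains $\curlC\cap\bigcap_{i\in K}H_i$ and hence $\boldanought$ --- and uses that a chamber, being a closed convex cone, also contains $\boldanought+\bolde'$; both routes are valid and amount to the same idea.
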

\begin{proof}
\begin{align*}
Z(q,q^{-s})|_{\pgoestopinverse}&=(-1)^{m}\sum_{w\in
W}q^{\lambda(w)}\EcurlCpps{[m]\backslash I_w}\ \ \ \ \ \ \ \ \ \ \ \ \ \ \  \text{(by Theorem~\ref{reciprocity theorem})}\\
&=(-1)^{m}q^{(A+\loclinfnonZlattice(\cellassignfn_{\boldanought})).\boldanought-(B.\boldanought)s}\sum_{w\in
W}q^{\lambda(w)}\EcurlCpps{[l]\backslash I_w}\\
&=(-1)^{m}q^{(A+\loclinfnonZlattice(\cellassignfn_{\boldanought})).\boldanought-(B.\boldanought)s}\sum_{w\in
W}q^{|\Phi^+|-\lambda(ww_0)}\EcurlCpps{I_{ww_0}}\\
&=(-1)^{m}q^{|\Phi^+|\,+(A+\loclinfnonZlattice(\cellassignfn_{\boldanought})).\boldanought-(B.\boldanought)s}Z(q,q^{-s}),
\end{align*}
where $w_0\in W$ is the longest element of the Weyl group. To see the second equality, note that by (\ref{cone tranlation equation}) it is enough to check that for every cell $F$ there exists a cell $F'$ such that $F\leq F'$ and $\boldanought\in\overline{F'}$ and then to observe that $\loclinfnonZlattice$ may be replaced by a constant function on $\overline{F'}$. This follows from the fact that $\curlC$ is a union of the chambers in $\cellsofcomplex$, each of which contains $\curlC\cap\bigcap_{i\in K}H_i$ (we define a chamber to be the closure of a cell which is maximal with respect to the face relation).
The third equality follows from the standard properties
$\lambda(w)+\lambda(ww_0)=|\Phi^+|$
and $w_0(\Phi^-)=\Phi^+$ (see \cite[Section~1.8]{humphreys2}). The latter gives
\begin{eqnarray*}
[l]\backslash I_w&=&\{i\in [l]\suchthat\alpha_i\notin w(\Phi^-)\}\\
&=&\{i\in [l]\suchthat\alpha_i\in ww_0(\Phi^-)\}\\
&=&I_{ww_0}.
\end{eqnarray*}
\end{proof}

\section{Integrals over reductive groups}\label{reduction section}
Let $\nonredG_0$ denote the connected component of $\nonredG$. By
\cite[Lemma~4.1 and Proposition~2.1]{duSL} we have that for almost
all $\maxideal$, $Z_{\nonredG,\rho,{\maxideal}}(s)=Z_{\nonredG_0,\rho,{\maxideal}}(s)$.
We therefore assume throughout that $\nonredG$ is connected. Fix a
reductive subgroup $G$ of $\nonredG$ such that $\nonredG=N\rtimes
G$, where $N$ is the unipotent radical of $\nonredG$. Fix a maximal torus $T$ of $G$. We impose the following restriction.

\begin{condition}\label{splitting condition}
The maximal torus $T$ of $G$ splits over $\globalfield$; that is,
there exists a $\globalfield$-isomorphism $\phi:T\to
\Gmult^{\dim{T}}$.
\end{condition}

This implies in particular that $T$ splits over $\localfield$ for every prime $\maxideal$.
One of the key observations made by du Sautoy and Lubotzky is that
one is free to choose an equivalent $\globalfield$-rational
representation. For a particular localization $\globalfield_{\maxideal}$, this may change the
integral; however, they showed that for almost all $\maxideal$ it
will not. Specifically, they proved the following.

\begin{lemma}[\protect{\cite[Lemma 4.2]{duSL}}]\label{equivalent representation
lemma} Let $\rho': \nonredG\to \GLn$ be a $\globalfield$-rational
representation equivalent to $\rho$; that is, there exists $A\in
\GLn(\globalfield)$ such that $\rho'(x)=A\rho(x)A^{-1}$ for all
$x\in \nonredG(\globalfield)$. Then, for almost all primes $\maxideal$,
\begindm
Z_{\nonredG,\rho',\maxideal}(s)=Z_{\nonredG,\rho,{\maxideal}}(s).
\enddm
\end{lemma}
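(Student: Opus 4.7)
The plan is to exploit the fact that only finitely many primes can be ``bad'' for the change-of-basis matrix $A$, and on the remaining primes conjugation by $A$ is an automorphism of both $\Mn(\localringofintegers)$ and $\GL{n}(\localringofintegers)$, so that all the ingredients defining $\ZcurlGrhoidealps$ (the domain of integration, the normalization of the Haar measure, and the integrand) agree when computed with $\rho$ versus $\rho'$.

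First, observe that the integrand is unchanged on the nose: for every $g\in\nonredG(\globalfield_\maxideal)$,
\begin{equation*}
\det\rho'(g) \;=\; \det\bigl(A\rho(g)A^{-1}\bigr) \;=\; \det\rho(g),
\end{equation*}
so $|\det\rho'(g)|^s = |\det\rho(g)|^s$ identically. Thus only the sets $\nonredG^+$ and $\nonredG(\localringofintegers)$ --- which in principle depend on the representation --- need to be controlled.

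Next, I would produce the finite set of bad primes. Write out $A$ and $A^{-1}$ as matrices over $\globalfield$ and let $S$ be the (finite) set of primes $\maxideal$ of $\globalringofintegers$ that appear in the denominators of any entry of $A$ or $A^{-1}$. For every $\maxideal\notin S$ we have $A,A^{-1}\in\GL{n}(\localringofintegers)$. Consequently conjugation by $A$ preserves both $\Mn(\localringofintegers)$ and $\GL{n}(\localringofintegers)$; indeed $\rho(g)\in\Mn(\localringofintegers)$ if and only if $A\rho(g)A^{-1}=\rho'(g)\in\Mn(\localringofintegers)$, and similarly with $\Mn$ replaced by $\GL{n}$. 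Applying this with the two defining conditions of the sets
\begin{align*}
\nonredG^+ &= \{g\in\nonredG(\localfield)\suchthat \rho(g)\in\Mn(\localringofintegers)\},\\
\nonredG(\localringofintegers) &= \{g\in\nonredG(\localfield)\suchthat \rho(g)\in\GL{n}(\localringofintegers)\}
\end{align*}
yields that for all $\maxideal\notin S$, these two sets are unchanged when $\rho$ is replaced by $\rho'$.

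Finally, I would assemble the conclusion. For $\maxideal\notin S$ the right-invariant Haar measure $\mu_{\nonredG}$ on $\nonredG(\localfield)$ has a unique normalization satisfying $\mu_{\nonredG}(\nonredG(\localringofintegers))=1$, and by the previous step this normalization is the same whether we define $\nonredG(\localringofintegers)$ via $\rho$ or via $\rho'$. Since the domain of integration and the integrand also agree, the two integrals $Z_{\nonredG,\rho,\maxideal}(s)$ and $Z_{\nonredG,\rho',\maxideal}(s)$ coincide. There is no real obstacle here; the only point requiring minor care is the bookkeeping that shows the finite exceptional set of primes depends only on $A$ (and not on $s$, $g$, or anything else), so that the exclusion is uniform in the statement ``for almost all $\maxideal$.''
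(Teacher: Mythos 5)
Your proof is correct and is the standard argument one would expect here; the paper does not reproduce a proof but simply cites \cite[Lemma~4.2]{duSL}, and your reasoning --- clear the denominators of $A$ and $A^{-1}$ to exclude a finite set of primes, observe that $\det\rho'=\det\rho$ identically, and note that conjugation by an element of $\GLn(\localringofintegers)$ preserves both $\Mn(\localringofintegers)$ and $\GLn(\localringofintegers)$ so that $\nonredG^+$, $\nonredG(\localringofintegers)$, and hence the Haar normalization all agree --- is precisely the expected route. One small point worth making explicit (you implicitly use it): integrality of all entries of both $A$ and $A^{-1}$ at $\maxideal$ forces $\det A\in\localringofintegers^\times$, so $A\in\GLn(\localringofintegers)$ rather than merely $A\in\Mn(\localringofintegers)$.
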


For our purposes we will require the representation $\rho$ to
satisfy number of properties. In view of Lemma~\ref{equivalent
representation lemma} it will be sufficient to show that these
properties are satisfied for some equivalent $\globalfield$-rational
representation. This will ensure that we can pass to this equivalent
representation for almost all primes $\maxideal$ without changing the
integral.

\begin{definition}\label{good representation definition}
We call a $\globalfield$-rational representation $\rho$ `good' if it
satisfies the following. Let the underlying ordered basis for the
representation space $V$ be $(u_i)_{i\in [n]}$. There exists a
decomposition of $[n]$ as $[n]=\disjointunion_{i=1}^c I_i$ giving a
decomposition of $V=\bigoplus_{i=1}^c U_i$, where $U_i$ is the
subspace of $V$ spanned by $(u_j)_{j\in I_i}$, so that, putting
$V_i=\bigoplus_{j=i}^{c} U_j$, we have
\begin{enumerate}
\item each $V_i$ is $\nonredG$-stable
\item each $U_i$ is $G$-stable
\item $N$ acts trivially on each section $V_i/V_{i+1}$
\item $T$ acts diagonally with respect to the basis $(u_i)_{i\in [n]}$
\item each $U_i$ is an irreducible subrepresentation of $\rho|_G$.
\end{enumerate}
\end{definition}
\begin{proposition}
Given a faithful
$\globalfield$-rational representation $\rho$ of $\nonredG$ there exists an
equivalent faithful $\globalfield$-rational representation $\rho'$
which is good and satisfies, for almost all primes,
\begindm
Z_{\nonredG,\rho',{\maxideal}}(s)=Z_{\nonredG,\rho,{\maxideal}}(s).
\enddm
\end{proposition}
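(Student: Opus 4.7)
The plan is to construct a $\globalfield$-basis of the representation space $V$ with respect to which $\rho$ already satisfies conditions (1)--(5) of Definition~\ref{good representation definition}; setting $\rho'$ to be the matrix form of $\rho$ in this basis then yields an equivalent $\globalfield$-rational representation, and Lemma~\ref{equivalent representation lemma} gives the asserted equality of local zeta functions for almost all primes. The task therefore reduces to the purely algebraic construction of such a basis over $\globalfield$.

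For the flag, I would first build the ascending filtration of $V$ by iterated $N$-fixed points: set $W_0 = 0$ and inductively define $W_{j+1}$ to be the preimage in $V$ of $(V/W_j)^N$. Since $N$ is unipotent, Kolchin's theorem ensures this filtration terminates with $W_c = V$ for some $c$; each $W_j$ is $\nonredG$-stable because $G$ normalizes $N$; and $N$ acts trivially on each quotient $W_{j+1}/W_j$ by construction. Because $G$ is a connected reductive group containing the $\globalfield$-split maximal torus $T$ (Condition~\ref{splitting condition}), $G$ is itself $\globalfield$-split, so $\rho|_G$ is completely reducible and each $W_{j+1}/W_j$ decomposes over $\globalfield$ as a direct sum of irreducible $G$-subrepresentations. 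Refine the $N$-filtration along such a decomposition, and reverse the indexing to obtain $V = V_1 \supset \cdots \supset V_{c+1} = 0$; then apply complete reducibility once more to lift each step to a $G$-stable $\globalfield$-subspace $U_i \subset V_i$ which is a $G$-complement of $V_{i+1}$ in $V_i$. The resulting decomposition $V = \bigoplus_i U_i$ secures conditions (1), (2), (3), and (5).

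For condition (4), I would exploit that $T$ is $\globalfield$-split to guarantee that every $\globalfield$-rational $T$-module has a weight-space decomposition defined over $\globalfield$: pick a $\globalfield$-basis of $T$-weight vectors within each $U_i$ and concatenate these in the order prescribed by the flag. The change-of-basis matrix $A \in \GLn(\globalfield)$ between this ordered basis $(u_i)_{i\in[n]}$ and the original basis defines $\rho'(x) := A\rho(x)A^{-1}$, which is good by construction, faithful because it is equivalent to a faithful representation, and $\globalfield$-rationally equivalent to $\rho$; Lemma~\ref{equivalent representation lemma} then completes the argument.

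The main subtlety, and the reason Condition~\ref{splitting condition} features prominently, is that every step must descend to $\globalfield$ rather than live only over an algebraic closure. The $\globalfield$-rational complete decomposition of $\rho|_G$ and the $\globalfield$-rationality of $T$-weight spaces both rely on $G$ and $T$ being $\globalfield$-split, whereas the $\globalfield$-rationality of the iterated $N$-fixed-point filtration is automatic from the $\globalfield$-structure of $\nonredG = N \rtimes G$. Given these ingredients, the construction is essentially formal.
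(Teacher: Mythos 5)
Your proposal is correct and follows essentially the same route as the paper. The only material difference is that where the paper invokes \cite[Lemma~4.3]{duSL} as a black box to produce a decomposition satisfying conditions (1)--(3), you reconstruct that decomposition from scratch via the iterated $N$-fixed-point filtration $W_j$ (using Kolchin's theorem for termination and the normalization of $N$ by $G$ for $\nonredG$-stability); the refinement to irreducible $G$-summands for condition (5), the diagonalization of the split torus $T$ for condition (4), and the final appeal to Lemma~\ref{equivalent representation lemma} all match the paper's argument. The reconstruction is a valid substitute and makes the argument more self-contained at the cost of some length, but it is not a genuinely different method.
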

\begin{proof}
By \cite[Lemma~4.3]{duSL}, there exists a representation equivalent
to $\rho$ whose underlying ordered basis gives a decomposition
$V=\bigoplus_{i=1}^c U'_i$ satisfying properties (1), (2) and (3)
above. Each $G$-stable subspace $U'_i$ can be decomposed further
into irreducible components under the $G$-action. This gives a
refinement of the original decomposition, say $V=\bigoplus_{i=1}^{c}
U_i$, which is easily seen to satisfy properties (1), (2), (3) and
(5). It remains to note that since $T$ splits over $\globalfield$,
there exists for each $i$ a basis $(u_j)_{j\in I_i}$ for $U_i$ on
which $T$ acts diagonally. The result now follows from
Lemma~\ref{equivalent representation lemma}.
\end{proof}
We now assume that $\rho$ is good and has the form described in
Definition~\ref{good representation definition}. We recall the
following definitions from \cite{duSL}. For each $i$ let
$N_i$ denote the kernel of the action of $N$ on $V/V_{i+1}$ (so we obtain a normal series with $N_1=N$ and $N_c=1$). By
identifying $\curlU_i:=U_1\oplus\ldots\oplus U_i$ with $V/V_{i+1}$
we obtain a faithful representation $\psi_i: N/N_i\to
\mathrm{GL}(\curlU_i)$. This defines a unique representation
$\varphi_i:\nonredG/N_i\to\GL{n}$ satisfying
\begindm
\begin{array}{llll}
\varphi_i(nN_i)(v)&=&\psi_i(nN_i)(v)& \ \mbox{for all}\ \ n\in N, v\in\curlU_i\\
\varphi_i(nN_i)(v)&=&v& \ \mbox{for all} \ \ n\in N, v\in {V_{i+1}}\\
\varphi_i(gN_i)(v)&=&\rho(g)(v)& \ \mbox{for all} \ \ g\in G, v\in V.
\end{array}
\enddm

If $X/N_i\leq \nonredG/N_i$ put $(X/N_i)^+:=\varphi_i^{-1}[\varphi_i((X/N_i)(\localfield))\cap(\Mn(\localringofintegers))]$. As in \cite{duSL}, we assume that the following
condition holds:

\begin{condition}\label{lifting condition}
For each $i\in [c]$ and for each $\overline{g}\in (\nonredG/N_i)^+$
there exists $g\in \nonredG^+$ such that $gN_i=\overline{g}$.
\end{condition}

This appears as Assumption~2.3 in \cite{duSL}. In the same paper, the authors state that the condition holds for almost all primes \cite[Corollary~4.5]{duSL}. As pointed out in Section~\ref{introduction section}, this is incorrect. We now explain how the integral may be reduced under Condition~\ref{lifting condition}. Let $\mu_N$ denote the right Haar
measure on $N$ normalized such that
$\mu_N(N(\localringofintegers))=1$. Each section $N_i/N_{i+1}$
admits a right Haar measure $\mu_{N_i/N_{i+1}}$ satisfying
$\mu_{N_i/N_{i+1}}(({N_i/N_{i+1}})^{+})=1$. This can be used
to define certain functions which describe an action of the
reductive subgroup $G$ on each section $N_{i}/N_{i+1}$:
\begin{definition}
For each $h\in G(\localfield)$ and $i\in [c-1]$ put
\begindm
\theta_i(h):=\mu_{N_i/N_{i+1}}(\{n\in
N_i/N_{i+1}\suchthat nh\in (\nonredG/N_{i+1})^+ \}).
\enddm
\end{definition}
Recall from (\ref{definition first of theta}) that $\theta(h):=\mu_N(\{n\in N\suchthat nh\in\nonredG^+\})$. In \cite{duSL}, du Sautoy and Lubotzky reduced the integral
$\ZcurlGrhops$ to an integral over the reductive subgroup~$G$. It
will be convenient for us to divide their result into two parts.

\begin{proposition}[cf. (2.1) in the proof of \protect{\cite[Theorem~2.2]{duSL}}] \label{proposition product of thetas} Suppose that  $(\nonredG,\rho)$ satisfies
properties (1), (2) and (3) in Definition~\ref{good representation
definition} and Condition~\ref{lifting condition} with respect to
some suitable decomposition of the representation space. Then
\begindm
\theta(h)=\prod_{i=1}^{c-1}\theta_i(h).
\enddm
\end{proposition}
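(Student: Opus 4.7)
The plan is to establish the formula by induction along the normal series $N=N_1\supset N_2\supset\cdots\supset N_c=1$. For each $i\in\{0,1,\ldots,c-1\}$, define
\begin{equation*}
\theta^{(i)}(h):=\mu_{N/N_{i+1}}\bigl(\{\bar n\in N/N_{i+1}:\bar n h\in(\nonredG/N_{i+1})^+\}\bigr),
\end{equation*}
where $\mu_{N/N_{i+1}}$ is normalized compatibly with the measures $\mu_{N_j/N_{j+1}}$ ($j\leq i$). Since $N_c=1$ and $\varphi_c=\rho$, we have $\theta^{(c-1)}(h)=\theta(h)$; set $\theta^{(0)}(h)=1$. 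It suffices to prove $\theta^{(i)}(h)=\theta_i(h)\cdot\theta^{(i-1)}(h)$, whose iteration yields the claim.

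For the inductive step I would disintegrate $\mu_{N/N_{i+1}}$ along the exact sequence $1\to N_i/N_{i+1}\to N/N_{i+1}\to N/N_i\to 1$. Fixing a measurable section $s:N/N_i\to N/N_{i+1}$, one obtains
\begin{equation*}
\theta^{(i)}(h)=\int_{N/N_i}\mu_{N_i/N_{i+1}}\bigl(\{n'\in N_i/N_{i+1}:s(\bar n)n'h\in(\nonredG/N_{i+1})^+\}\bigr)\,d\mu_{N/N_i}(\bar n).
\end{equation*}
To evaluate the inner integrand, I would exploit properties (1)--(3) of Definition~\ref{good representation definition}: in the decomposition $V=\bigoplus U_k$, the matrix $\rho(n)$ for $n\in N$ is block lower-triangular with identity diagonal, while $\rho(h)$ for $h\in G$ is block diagonal. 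Consequently, $\bar g\in(\nonredG/N_j)^+$ amounts to integrality of all blocks $[\rho(g)]_{a,k}$ with row index $a\leq j$. A direct commutator computation shows $[N_i,N_i]\subseteq N_{i+1}$, so that $N_i/N_{i+1}$ is abelian and is parameterized additively by the $(i+1)$-th block row. The extra conditions imposed by $(\nonredG/N_{i+1})^+$ (beyond those defining $(\nonredG/N_i)^+$) are therefore that $\bigl([s(\bar n)]_{i+1,k}+[n']_{i+1,k}\bigr)\rho(h)|_{U_k}$ be integral for each $k\leq i$. By translation invariance of $\mu_{N_i/N_{i+1}}$, the inner fiber is then either empty or a coset of $\{n':n'h\in(\nonredG/N_{i+1})^+\}$, hence of measure $\theta_i(h)$.

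To decide between the two cases I would argue by projection. If the fiber is non-empty then $\bar n h\in(\nonredG/N_i)^+$, because $\varphi_i$ is obtained from $\varphi_{i+1}$ precisely by zeroing out the $(i+1)$-th block row, so integrality descends. Conversely, if $\bar n h\in(\nonredG/N_i)^+$, Condition~\ref{lifting condition} applied at level $i$ produces $g\in\nonredG^+$ with $gN_i=\bar n h$; writing $g=\tilde n h$ with $\tilde n\in N$, the element $\tilde n N_{i+1}$ is a lift of $\bar n$ to $N/N_{i+1}$ satisfying $(\tilde n N_{i+1})h\in(\nonredG/N_{i+1})^+$, so the fiber is non-empty. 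Combining, the integrand equals $\theta_i(h)\cdot\mathbf{1}_{(\nonredG/N_i)^+}(\bar n h)$, and the outer integral gives $\theta_i(h)\cdot\theta^{(i-1)}(h)$, as needed.

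The main obstacle is the measure-theoretic and matrix bookkeeping: one must verify that the Haar measure normalizations cohere across the short exact sequences (so that $\mu_{N_i/N_{i+1}}((N_i/N_{i+1})^+)=1$ is compatible with $\mu_N(N(\localringofintegers))=1$), confirm the abelianness and additive parametrization of $N_i/N_{i+1}$ via the block description of $N$, and justify the block-truncation relationship between $\varphi_i$ and $\varphi_{i+1}$ in detail. These are all consequences of the good representation structure of $\rho$, but must be assembled carefully in line with the setup of \cite{duSL}.
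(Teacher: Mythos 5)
The paper offers no proof of its own here, only the citation to equation~(2.1) in the proof of \cite[Theorem~2.2]{duSL}. Your inductive disintegration of the Haar measure along the normal series $N=N_1\supset\cdots\supset N_c=1$, using the block-triangular structure of $\rho$ supplied by properties (1)--(3) of Definition~\ref{good representation definition} to reduce each step to an affine condition on the $(i+1)$-th block row and Condition~\ref{lifting condition} to guarantee non-emptiness of the fiber over $\bar n$ precisely when $\bar n h\in(\nonredG/N_i)^+$, correctly reconstructs the cited argument; the one small imprecision --- that $\theta^{(0)}(h)$ equals the indicator of $G^+$ rather than the constant $1$ --- is harmless, since $\theta$ and every $\theta_i$ vanish off $G^+$.
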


\begin{definition}\label{definition reduced integral}
Put
\begindm
\ZGrhops:=\int_{G^+}|\detrho(h)|^s\theta(h)\mu_{G}(h).
\enddm
\end{definition}

\begin{theorem}[\protect{cf. \cite[Proof of Theorem~2.2]{duSL}}] \label{theorem reduction of integral}
We have
\begindm
\ZnonredGrhops=\ZGrhops.
\enddm
\end{theorem}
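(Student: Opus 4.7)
The plan is to exploit the semidirect-product decomposition $\nonredG = N \rtimes G$ by writing each $g \in \nonredG(\localfield)$ uniquely as $g = nh$ with $n \in N$ and $h \in G$, and then factoring the Haar measure accordingly. First I would verify that under the identification $(n,h) \leftrightarrow nh$ the product of right Haar measures $\muN \otimes \muG$ coincides with $\mucurlG$. Right invariance is immediate from the identity
\[
(nh)(n_0 h_0) \;=\; \bigl(n \cdot (h n_0 h^{-1})\bigr)(h h_0),
\]
since for fixed $h$ the inner map $n \mapsto n \cdot h n_0 h^{-1}$ is right translation in $N$, while $h \mapsto h h_0$ is right translation in $G$. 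The normalisations agree because $\nonredG(\localringofintegers) = N(\localringofintegers) \cdot G(\localringofintegers)$, essentially the content of \cite[Proposition~2.1]{duSL}.

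The goodness of $\rho$ now supplies compatible block-matrix normal forms: property~(3) of Definition~\ref{good representation definition} makes $\rho(n)$ block unit lower-triangular (with identity diagonal blocks indexed by $U_1,\ldots,U_c$), whereas property~(2) makes $\rho(h)$ block diagonal. In particular $\det\rho(n) = 1$, so that $|\det\rho(nh)|^s = |\det\rho(h)|^s$. Fubini's theorem then yields
\begin{align*}
\ZnonredGrhops
&= \int_{G(\localfield)} \int_{N(\localfield)} \mathbf{1}_{\curlGplus}(nh)\, |\detrhoh|^s \, \muN(n)\, \muG(h) \\
&= \int_{G(\localfield)} \theta(h)\, |\detrhoh|^s\, \muG(h),
\end{align*}
where the inner integral is $\theta(h)$ by the definition~(\ref{definition first of theta}).

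It remains to argue that the outer integrand is supported on $\Gplus$. If $nh \in \curlGplus$ then $\rho(n)\rho(h) \in \Mn(\localringofintegers)$; since $\rho(n)$ is block unit lower-triangular and $\rho(h)$ block diagonal, the diagonal blocks of the product coincide with those of $\rho(h)$, so each block of $\rho(h)$ lies in $\mathrm{M}_{|U_i|}(\localringofintegers)$. Being block diagonal, $\rho(h) \in \Mn(\localringofintegers)$, that is, $h \in \Gplus$. Restricting the outer integral to $\Gplus$ yields $\ZnonredGrhops = \ZGrhops$. The main subtlety lies in the measure factorisation together with the identification $\nonredG(\localringofintegers) = N(\localringofintegers) \cdot G(\localringofintegers)$; once these are in hand, everything else is routine bookkeeping forced by the good-representation conventions.
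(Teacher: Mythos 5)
Your proof is correct and follows essentially the same route as the paper, which defers to \cite[Proof of Theorem~2.2]{duSL}: factor the right Haar measure of $\mathcal{G}=N\rtimes G$ as $\mu_N\otimes\mu_G$ and apply Fubini, using the block unit-triangular/block-diagonal structure of $\rho$ on $N$ and $G$ (forced by properties (2) and (3) of Definition~\ref{good representation definition}) to conclude both that $\det\rho|_N=1$ and that $nh\in\mathcal{G}^+$ forces $h\in G^+$.
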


Note that in contrast to Proposition~\ref{proposition product of thetas}, Theorem~\ref{theorem reduction of integral} is unconditional. In Section~\ref{bruhat section} when we come to analyze the integral using a $\maxideal$-adic Bruhat decomposition, we will need to understand how $\theta$ varies on Bruhat double cosets. Also, it will be crucial to express $\theta|_{T}$ in terms of characters of $T$. We now prove two results about the action of $G$ on $N$ which will enable us to deal with these issues.\\

\begin{lemma}\label{theta constant on double cosets lemma}
For all $h\in G(\localfield)$ and $h_1, h_2\in G(\localringofintegers)$,
$\theta(h)=\theta(h_1 h h_2)$.
\end{lemma}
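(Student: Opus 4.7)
The plan is to prove right-invariance and left-invariance separately, exploiting the definitions of $\nonredG^+$ and $G(\localringofintegersa)$ together with the fact that $N$ is normal in $\nonredG_0$.

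For right-invariance, I would write $\theta(hh_2)=\mu_N(\{n\in N\suchthat nhh_2\in\nonredG^+\})$ and observe that since $h_2\in G(\localringofintegersa)$, the matrix $\rho(h_2)$ lies in $\GL{n}(\localringofintegersa)$ by definition of $G(\localringofintegersa)$. Multiplication on the right by such a matrix preserves $\Mn(\localringofintegersa)$, so $\rho(nhh_2)\in\Mn(\localringofintegersa)\ifandonlyif\rho(nh)\in\Mn(\localringofintegersa)$. Hence the set defining $\theta(hh_2)$ coincides with the set defining $\theta(h)$, giving $\theta(hh_2)=\theta(h)$.

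For left-invariance, I would use the normality of $N$ to rewrite $nh_1h=h_1\,(h_1^{-1}nh_1)\,h$. Setting $\phi_{h_1}(n):=h_1^{-1}nh_1$, this is an algebraic group automorphism of $N$. Since $\rho(h_1)\in\GL{n}(\localringofintegersa)$ is a unit in $\Mn(\localringofintegersa)$, the integrality condition $\rho(h_1(h_1^{-1}nh_1)h)\in\Mn(\localringofintegersa)$ is equivalent to $\phi_{h_1}(n)h\in\nonredG^+$. Therefore
\begindm
\theta(h_1 h)=\mu_N\bigl(\phi_{h_1}^{-1}\{m\in N\suchthat mh\in\nonredG^+\}\bigr).
\enddm
It remains to show that $\phi_{h_1}$ preserves $\mu_N$. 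By uniqueness of Haar measure on the unimodular group $N(\localfield)$, the pushforward $(\phi_{h_1})_*\mu_N$ equals $c\cdot\mu_N$ for some $c>0$. To pin down $c$, note that for any $n\in N(\localringofintegersa)$, both $h_1$ and $n$ lie in $\nonredG(\localringofintegersa)$, so $\phi_{h_1}(n)\in\nonredG(\localringofintegersa)\cap N=N(\localringofintegersa)$; applying the same argument to $\phi_{h_1^{-1}}$ gives $\phi_{h_1}(N(\localringofintegersa))=N(\localringofintegersa)$. Evaluating the pushforward relation on $N(\localringofintegersa)$ and using the normalization $\mu_N(N(\localringofintegersa))=1$ yields $c=1$. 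Consequently $\theta(h_1 h)=\theta(h)$, and combining with right-invariance gives $\theta(h)=\theta(h_1 h h_2)$.

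The only step requiring care is the Haar-measure argument for left-invariance; the key input is that conjugation by an element of $G(\localringofintegersa)$ stabilizes $N(\localringofintegersa)$, which is immediate from the definition $X(\localringofintegersa)=\{g\in X(\localfield):\rho(g)\in\GL{n}(\localringofintegersa)\}$ applied to $X=N$ and $X=\nonredG$.
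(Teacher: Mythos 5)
Your proof is correct and takes essentially the same approach as the paper: both arguments hinge on showing that conjugation by $h_1$ is a Haar-measure-preserving automorphism of $N$, established by uniqueness of right Haar measure together with the normalization $\phi_{h_1}(N(\localringofintegers))=N(\localringofintegers)$. The only cosmetic difference is that you split the claim into separate left- and right-invariance arguments, whereas the paper handles $h_1$ and $h_2$ simultaneously via the identity $M_{h_1 h h_2}^{h_1}=M_h$; your mention of unimodularity is true but not actually needed, since the pushforward of a right Haar measure by a group automorphism is automatically again a right Haar measure.
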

\begin{proof}
For $g\in G(\localfield)$ put $M_g=\{n\in N\suchthat ng\in \nonredG^+\}$.
It follows that $M_{h_1 h h_2}^{h_1}=M_h$. Consider a
measure $\mu'_N$ on $N$ given by $\mu'_N(A)=\mu_N(A^{h_1})$
for all measurable sets~$A$. This defines
a right Haar measure on $N$. By uniqueness of the Haar measure there
exists $\lambda\in\reals_{>0}$ such that $\mu'_N=\lambda\mu_N$.
However,
$\mu'_N(N(\localringofintegers))=\mu_N(N(\localringofintegers)^{h_1})=\mu_N(N(\localringofintegers))=$1, so $\lambda=1$. It follows that $\theta(h)=\mu_N(M_h)=\mu_N(M_{h_1 h
h_2})=\theta(h_1 h h_2)$.
\end{proof}
Next we analyze $\theta|_T$ in the case that $\rho$ is good (cf. Definition~\ref{good representation definition}) and $(\nonredG,\rho)$ satisfies Condition~\ref{lifting condition}. There
exists a natural identification of $N_i/N_{i+1}$ with a subspace of
$U_{i+1}^{s_i}$, where $s_i=\sum_{j=1}^{i}\dim{U_j}$. This comes
from considering the action of $N_i/N_{i+1}$ on $V/V_{i+2}$. Since
the induced action on $V/V_{i+1}$ is trivial, $N_i/N_{i+1}$ is
identified with an additive algebraic subgroup of
$U_{i+1}^{s_i}\cong\Gadd^{s_i(s_{i+1}-s_i)}$. Note that $n\in
(N_i/N_{i+1})^+$ if and only if its image in $U_{i+1}^{s_i}$ is in
the $\localringofintegers$-span of the basis $(\{u_j\}_{j\in
I_{i+1}})^{s_i}$. Now $G$ acts on $U_{i+1}$, hence on $U_{i+1}^{s_i}$;
in fact if $n\in N_i/N_{i+1}$, $g\in G$ and $v$ is the image of
$n$ in $U_{i+1}^{s_i}$, then $ng\in\left(\nonredG/N_{i+1}\right)^+$
if and only if $v.g$ is contained in the $\localringofintegers$-span of $(\{u_j\}_{j\in
I_{i+1}})^{s_i}$. Thus the question of integrality is reduced to a
question in linear algebra.

\begin{definition}\label{definition T sigma}
For each $\sigma\in\Sym(n)$, put
\begindm
T_{\sigma}(\localfield):=\{t\in
T(\localfield)\suchthat
\valuation{\lambda_{\sigma(i)}(t)}\leq\valuation{\lambda_{\sigma(j)}(t)}\
\ \mbox{for all}\ \ 1\leq i < j\leq n\},
\enddm
where $\lambda_i(t)$ is the eigenvalue for the action of $t$ on the
basis element $u_i$.
\end{definition}

\begin{lemma}\label{local linearity of theta lemma}
Let $\rho$ be a good representation of $\nonredG$ and suppose that,
for almost all primes $\maxideal$, $(\nonredG,\rho,{\maxideal})$ satisfies
Condition~\ref{lifting condition}. Define $\theta$ with respect to
$\rho$ as described above. Then for each $\sigma\in\Sym(n)$, there
exist non-negative integers $m_i(\sigma)$ $\nobreak{(i=1,\ldots,n)}$ such
that, for almost all $\maxideal$, for all $t\in T_{\sigma}(\localringofintegers)$,
\begindm
\theta(t)=|\lambda_1(t)|^{-m_1(\sigma)}\ldots
|\lambda_n(t)|^{-m_n(\sigma)}.
\enddm
\end{lemma}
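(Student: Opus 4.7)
The plan is to use Proposition~\ref{proposition product of thetas} to write $\theta(t)=\prod_{i=1}^{c-1}\theta_i(t)$ and to establish the claimed multiplicative form for each $\theta_i$ separately; the global exponents are then $m_k(\sigma)=\sum_{i=1}^{c-1}m_k^{(i)}(\sigma)$.

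Fix $i$ and let $W_i\subseteq\Hom(\curlU_i,U_{i+1})$ be the linear subspace identified with $N_i/N_{i+1}$ as described immediately before the statement. For $L\in W_i$ corresponding to $n$ and for $t\in T$, a direct block-matrix computation gives
\[ nt=\begin{pmatrix}\rho_i(t)&0\\ L\rho_i(t)&\rho_{i+1}(t)\end{pmatrix}, \]
so, since $\rho_i(t)$ and $\rho_{i+1}(t)$ are automatically integral when $t\in T^+$, the condition $nt\in(\nonredG/N_{i+1})^+$ reduces to $L_{j,k}\lambda_k(t)\in\localringofintegers$ for all $j\in I_{i+1}$ and $k\in[s_i]$. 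Writing $M_i$ for the standard integer lattice in $\Hom(\curlU_i,U_{i+1})$ and $M_{i,t}$ for the dilated lattice $\{L:L_{j,k}\in\lambda_k(t)^{-1}\localringofintegers\}$, we obtain
\[ \theta_i(t)=[\,W_i(\localfield)\cap M_{i,t}:W_i(\localfield)\cap M_i\,]. \]
For almost all $\maxideal$, $W_i(\localfield)\cap M_i$ is a free $\localringofintegers$-module of rank $r_i=\dim W_i$; fix an $\localringofintegers$-basis $w_1,\ldots,w_{r_i}$ and let $B$ be the matrix with $\localringofintegers$-entries whose columns encode the $w_l$. Parameterising $W_i$ by $a\in\localfield^{r_i}$ via $a\mapsto\sum_l a_l w_l$, the set $\{L\in W_i(\localfield):L\in M_{i,t}\}$ becomes $B_t^{-1}\localringofintegers^{|I_{i+1}|s_i}$, where $B_t$ is obtained from $B$ by multiplying the row indexed by $(j,k)$ by $\lambda_k(t)$. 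Smith normal form then yields $\theta_i(t)=q^{d_1+\cdots+d_{r_i}}$, where the $d_l\ge 0$ are the valuations of the elementary divisors of $B_t$.

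The decisive step is to verify that on each chamber $T_\sigma(\localringofintegers)$ the quantity $\sum_l d_l$ is a non-negative $\integers$-linear combination of the valuations $e_k:=\valuation{\lambda_k(t)}$. By the classical characterisation of elementary divisors via determinantal divisors, $d_1+\cdots+d_l$ equals the minimum $\maxideal$-valuation of an $l\times l$ minor of $B_t$; because $B_t$ differs from $B$ only by row scalings, each such minor is $\bigl(\prod_{(j,k)\in R}\lambda_k(t)\bigr)\cdot\det(B[R,C])$ for a suitable choice of $l$ rows $R$ and $l$ columns $C$. For almost all $\maxideal$ the valuations $\valuation{\det(B[R,C])}$ are independent of $\maxideal$ and take only finitely many values, so on $T_\sigma(\localringofintegers)$ the minimum is attained at some $(R,C)$ depending only on $\sigma$ and $l$: rows are selected by a greedy matroid procedure driven by the ordering $e_{\sigma(1)}\le\cdots\le e_{\sigma(n)}$, exhausting $|I_{i+1}|$ rows from each column in that order subject to preserving the rank of the chosen rows of $B$. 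Consequently each partial sum $d_1+\cdots+d_l$, and hence $\sum_l d_l$ itself, is an $\Nnonneg$-linear form in the $e_k$ on $T_\sigma(\localringofintegers)$, which translates into the desired product $\prod_k|\lambda_k(t)|^{-m_k^{(i)}(\sigma)}$.

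The main obstacle is this last step. The subspace $W_i$ is in general not coordinate-aligned in $\Hom(\curlU_i,U_{i+1})$, so the matrix $B$ can be essentially arbitrary and its sub-minors may vanish in complicated patterns. Only after passing to almost all primes, so that the non-vanishing pattern of minors of $B$ matches that of the generic fibre, and then restricting to a chamber of $T$, so that the greedy selection of rows minimising $\sum_{(j,k)\in R}e_k$ becomes determinate, does the minimum linearise. This mechanism explains precisely why $\theta|_T$ is only \emph{piecewise} (the $\maxideal$-adic absolute value of) a character rather than a genuine character, and is what extends the framework of~\cite{duSL}.
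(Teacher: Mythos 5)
Your proof is correct and reaches the lemma's conclusion, but it takes a genuinely different route from the paper's. Both approaches share the initial reductions: factoring $\theta=\prod\theta_i$ via Proposition~\ref{proposition product of thetas}, identifying $N_i/N_{i+1}$ with a subspace $W_i$ of a Hom-space on which $t\in T$ acts diagonally, and reducing $\theta_i(t)$ to a lattice index. The divergence is in how that index is shown to linearize on the chamber $T_\sigma$. The paper constructs, for each ordering $\tau$ of the diagonal basis $X_\tau$, an adapted basis $Y_\tau$ for $N_i/N_{i+1}$ in column-echelon form (property~(*)), and exploits the interplay between the staircase pattern and the non-decreasing eigenvalue valuations on $T_\sigma$ so that the integrality constraints on the coordinates of $\sum a_j w_j(\tau)$ decouple one pivot at a time; this yields directly $\theta_i(t)=\prod_j|\nu_j(t)|^{-\delta_j}$ with each $\delta_j\in\{0,1\}$, the nonzero $\delta_j$ marking pivot positions. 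You instead parameterize $W_i$ by an arbitrary integral basis, pass to Smith normal form, use the determinantal-divisor characterization of the elementary divisors, and then observe that minimizing $\sum_{(j,k)\in R}e_k$ over full-rank row subsets $R$ of $B$ is a matroid optimization solved by the greedy algorithm; on a chamber the greedy selection is determined by $\sigma$ and by the matroid (which, for almost all $\maxideal$, coincides with the matroid of the generic fibre), so the minimum is a fixed $\naturalszero$-linear form in the $e_k$. These produce the same exponents, since the greedy basis in your row matroid is exactly the set of pivot rows in the paper's echelon basis. The paper's construction buys a shorter argument and the sharper per-coordinate bound $\delta_j\in\{0,1\}$; yours makes the matroid structure behind the piecewise (chamber-dependent) behaviour of $\theta$ explicit, which is conceptually illuminating at the cost of more machinery. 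One small loose end: to ensure the non-vanishing pattern of minors of $B$ is prime-independent you should fix a $\globalfield$-basis of $W_i$ once and for all (noting it is an $\localringofintegers$-basis for almost all $\maxideal$), rather than choosing $w_1,\ldots,w_{r_i}$ locally as written.
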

\begin{proof}
We fix a one-to-one correspondence
$\tau\mapsto X_{\tau}$ between elements of \\ $\Sym(s_i(s_{i+1}-s_i))$ and
orderings of the basis $X=(\{u_j\}_{j\in I_{i+1}})^{s_i}$ for
$U_{i+1}^{s_i}(\globalfield)$. Fix an ordered basis $(v_j)_{j\in [m]}$ for $N_i/N_{i+1}(\globalringofintegers)$, where $m:=\dim{N_i/N_{i+1}}$. Given $\nobreak{\tau\in\Sym(s_i(s_{i+1}-s_i))}$, there exists
an ordered basis $(w_j(\tau))_{j\in [m]}$ for $N_i/N_{i+1}(\globalfield)$ with the following property:\\

(*) For all $j\in[m-1]$ and $k\in [s_i(s_{i+1}-s_i)-1]$, if $w_{j}(\tau)$ has zero projection on the subspace generated by the first $k$ basis elements of $X_{\tau}$, then $w_{j+1}(\tau)$ has zero projection on the subspace generated by the first $k+1$ basis elements of $X_{\tau}$.\\

For each $\tau$ we fix such an ordered
basis for $N_i/N_{i+1}(\globalfield)$ and denote it by $Y_{\tau}$. Let $\Delta_i(\tau)$ denote the linear map $N_i/N_{i+1}(\globalfield)\to N_i/N_{i+1}(\globalfield)$ given by $v_j\mapsto w_j(\tau)$ for $j=1,\ldots,m$. Note that if $t\in T$, $t$ acts diagonally on
the basis $X$ for $U_{i+1}^{s_i}(\globalfield)$, since $\rho$ is good.
Given $\sigma$, there exists ${\tau}\in \Sym(s_i(s_{i+1}-s_i))$ such that,
for all $t\in T_{\sigma}(\localringofintegers)$, the valuations of
the eigenvalues for the action of $t$ on the ordered basis
$X_{\tau}$ are in non-decreasing order. Fix some such suitable
${\tau}$ (there is some freedom here which need not concern us) and write the eigenvalues as
$\nu_1(t),\ldots,\nu_{s_i(s_{i+1}-s_i)}(t)$.
Now fix $t\in T_{\sigma}(\localringofintegers)$. Let $Y_{\tau}$ be
the basis for $N_i/N_{i+1}(\globalfield)$ chosen above and
note that, for almost all $\maxideal$, the $\localringofintegers$-span of $Y_{\tau}$ is
precisely $N_i/N_{i+1}(\localringofintegers)$. This follows from the fact that the transformation $\Delta_i(\tau)$ defined above lies (for almost all $\maxideal$) in $\GL{m}(\localringofintegers)$. By construction,
$Y_{\tau}$ satisfies (*) with respect to $X_{\tau}$.
We may further assume that, in the expression for each element of
$Y_{\tau}$ as a linear combination of the elements of the basis
$X_{\tau}$, every coefficient has valuation zero (this holds for
almost all $\maxideal$). It now follows immediately that
$\theta_i(t)$ has the form
$\prod_{j=1}^{s_i(s_{i+1}-s_i)}|\nu_j(t)|^{-\delta_j}$ where each
$\delta_j\in\{0,1\}$. Note that for each $j\in [s_i(s_{i+1}-s_i)]$ there
exists $k\in [s_{i+1}]\backslash [s_i]$ such that $\nu_j(t)=\lambda_k(t)$, where
$\lambda_k(t)$ is the eigenvalue for the action of $t$ on $u_k$, so by Proposition~\ref{proposition product of thetas}
we obtain an expression of the required form by taking the product
of the expressions obtained for each $\theta_i(t)$. It remains to
note that this construction provides non-negative integers
$m_j(\sigma)$ depending only on $\sigma$; in particular they are
independent of $t\in T_{\sigma}(\localringofintegers)$ and of the
localization $\localfield$.
\end{proof}

\section{A combinatorial expression for $Z_{G,\rho,{\maxideal}}(s)$}\label{bruhat section}

We begin by recalling the set-up of \cite{duSL} in the reductive
case (with some important distinctions). Let $\Phi$ be the root system of $G$ relative to $T$ (that is, those elements of $\HomTGm$ which give rise to non-trivial weights for the adjoint action on the Lie algebra of $G$).  As in Section~\ref{reciprocity section}, let $\alpha_1,\ldots,\alpha_l$ be a set of fundamental roots for $\Phi$ and let $W$ be the Weyl group. Let $S$ be the maximal central torus of $G$ and put $d:=\dim{S}$. It is well-known that $G=S.G'$, where $G'$ is the derived subgroup of $G$. Then $G'$ is semisimple (see, for instance, \cite[p.\ 10]{Borel}). The root systems of $G$ and $G'$ are isomorphic, hence by \cite[26.2, Corollary B (f)]{humphreys1} the semisimple rank of $G$ is $l$. It follows that the rank of $G$ is $l+d$. To each root $\alpha$ there corresponds a
minimal closed unipotent subgroup $U_\alpha$ of $G$ such that conjugation by elements of $T$ maps $U_\alpha$ into itself and there exists an isomorphism $\theta_\alpha:\Gadd\to U_\alpha$ satisfying $t \theta_\alpha (x) t^{-1} =\theta _\alpha (\alpha (t) x)$ for all $x\in \Gadd$, $t\in T$ (see \cite[26.3, Theorem]{humphreys1}). By Condition~\ref{splitting condition} we have a $\globalfield$-isomorphism $\phi: T\to \Gmult^{l+d}$. We will need the following. 

\begin{condition}\label{good reduction condition}
The group $G$, the maximal torus $T$, and the isomorphisms
${\phi:{T\to(\Gmult)^{l+d}}}$ and $\theta_\alpha:\Gadd\to U_\alpha$
($\alpha\in \Phi$) all have good reduction mod $\unifparam$.
\end{condition}

This was assumed also in \cite{Igusa}. As observed in \cite[p.\ ~74]{duSL}, it holds for almost all primes $\maxideal$. By good reduction of the maps we mean that reducing mod $\pi$ gives induced maps ${T(\localringofintegers/\maxideal)\to (\localringofintegers/\maxideal)^{l+d}}$ and $\localringofintegers/\maxideal\to U_\alpha(\localringofintegers/\maxideal)$ which are isomorphisms.
The finite Weyl group $W$ of $G$ is isomorphic to $N(T)/T$, where
$N(T)$ is the normalizer of $T$ in $G$. We define actions of $W$ on
$\HomTGm$ and on $\HomGmT$ as follows: if $w\in W$, $\alpha\in
\HomTGm$ and $\xi\in\HomGmT$, put $(w\alpha)(t)=\alpha(w^{-1}(t))$ for
all $t\in T$ and put $(w\xi)(\tau)=w(\xi(\tau))$ for all
$\tau\in\Gmult$, where the action of $W$ on $T$ is by conjugation.
In particular, the former induces an action of $W$ on $\Phi$. A
consequence of Condition~\ref{good reduction condition} is that it
is possible to take a coset representative $g_w$ for each $w\in
W=N(T)/T$ such that $g_w\in N(T)(\localringofintegers)$ (see
\cite[p.\ 697]{Igusa}). We now put $\Xi:=\HomGmT$ and write
$\curlV:=\HomTGm\tensor_{\integers} \reals$,
$\curlV^*:=\HomGmT\tensor_{\integers} \reals$. Note that $\Xi\cong \integers^{l+d}\cong \HomTGm$ since $G$ has rank $l+d$.

For each $\alpha\in\HomTGm$ and $\xi\in\HomGmT$ let $\alphapairxi$
be the integer satisfying $\alpha(\xi(\tau))=\tau^{\alphapairxi}$
for all $\tau\in\Gmult$. This provides a pairing
$\HomTGm\times\HomGmT\to\integers$ given by
$(\alpha,\xi)\mapsto\alphapairxi$ which extends uniquely to a linear
map $\curlV\times \curlV^*\to\reals$. There
exists a finite set $\Phi^*$ of coroots in $\Xi$ which
is invariant under the $W$-action on $\Xi$ and satisfies the
property that
$\pairwith{\alpha}{\xi}=\pairwith{w\alpha}{w\xi}$ for all $w\in W$, $\alpha\in \Phi$, $\xi\in\Phi^*$. We define the
affine Weyl group $\curlW$ of $G$ relative to $T$ as
$\curlW:=W\ltimes \{ t_\xi | \xi\in\Xi\}$ where $t_\xi:x\mapsto
x+\xi$ is a translation on $\curlV^*$ and the action is given by
$(w_1t_{\xi_1})(w_2t_{\xi_2})=w_1w_2t_{w_2^{-1}\xi_1+\xi_2}$.

The chosen fundamental roots define sets of positive and negative roots which we write as $\posroots$ and $\negroots$ respectively. Put 
\begindm
U^{+}:=\prod_{\alpha\in{\Phi^{+}}}U_{\alpha}
\enddm
and similarly for $U^{-}$. We are now able to define the Iwahori subgroup. This is given as ${\displaystyle
\curlB:=U^{+}(\pi\localringofintegers)T(\localringofintegers)U^-(\localringofintegers).}$
We have the following $\maxideal$-adic Bruhat Decomposition:

\begin{displaymath}G(\localfield)=\disjointunion_{\wtxi\in\curlW}\curlB
g_w\xipi\curlB\end{displaymath} and
\begin{equation}\label{equation bruhat decomposition second part}G(\localringofintegers)=\disjointunion_{w\in W}\curlB g_w\curlB .\end{equation}

(\cite{IwMa} and \cite[p.\ 74]{Iwahori}). Define a length function $\lambda$ on $\curlW$ by
\begin{eqnarray*}q^{\lambda(\wtxi)}&:=&\textrm{card}(\curlB g_w\xipi\curlB/\curlB)\\
&=&\muG(\curlB g_w\xipi\curlB)/\muG(\curlB).  \end{eqnarray*}
The restriction of $\lambda$ to the finite Weyl group agrees with the
usual length function on $W$. For each $\xi\in\Xi$ there is a unique
element $w_\xi\in W$ such that
$\lambda(\wtxi)$, as a function of $w$, attains a minimum
precisely at $w_\xi$ and
\begin{equation}\label{equation first lambda}
\lambda(w\wxitxi)=\lambda(w)+\lambda(\wxitxi)\ \ \mbox{for all}\ \
w\in W
\end{equation}
\cite[p.\ 21]{IwMa}. As noted in \cite[p.\ 704]{Igusa}, it follows that
\begin{equation}\label{equation second lambda}
\lambda(w_\xi t_\xi)=\sum_{\alpha\in
w_\xi^{-1}(\posroots)}\alphapairxi-\lambda(w_\xi).
\end{equation}
Put
\begindm
\begin{array}{lll}
\Xi^+&:=&\{\xi\in\Xi\suchthat\xipi\in G^+\}, \\
\Xiw&:=&\{\xi\in\Xi\suchthat\wxi=w\},\\
\Xiwplus&:=&\Xiplus\cap\Xiw.
\end{array}
\enddm
\begin{proposition}\label{first combinatorial expression proposition}
Put $\alpha_0:=\prod_{\alpha\in\posroots}\alpha$. Then for almost all primes $\maxideal$,
\begindm
\ZcurlGrhoidealps=\sum_{w\in W}q^{-\lambda(w)}\sum_{\xi\in
\wXiwplus}q^{\pairwith{\alpha_0}{\xi}}|\detrho(\xi(\unifparam))|^s\theta(\xi(\unifparam)).
\enddm
\end{proposition}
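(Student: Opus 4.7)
The plan is to combine the reduction in Theorem~\ref{theorem reduction of integral} with the $\maxideal$-adic Bruhat decomposition~(\ref{equation bruhat decomposition second part}) and then manipulate the resulting sum via the length-function identities~(\ref{equation first lambda}) and~(\ref{equation second lambda}).

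First, by Theorem~\ref{theorem reduction of integral} it suffices to evaluate $\ZGrhops = \int_{G^+}|\detrho(h)|^s\theta(h)\muG(h)$. Decompose $G(\localfield)$ into Iwahori double cosets $\iwahoridc$ indexed by $\wtxi\in\curlW$. I would first check that such a double coset meets $G^+$ if and only if it lies entirely in $G^+$, and that this happens precisely when $\xi\in\Xi^+$: indeed, since $g_w\in G(\localringofintegers)\subseteq G^+$ and $\curlB\subseteq G(\localringofintegers)$, membership in $G^+$ reduces to $\xipi\in G^+$, i.e.\ $\xi\in\Xi^+$. Next, I would show the integrand is constant on each double coset. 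For the determinant factor this is immediate because $\curlB\subseteq G(\localringofintegers)$ implies $|\detrho(\cdot)|$ is unchanged on $\curlB\cdot h\cdot\curlB$; for $\theta$ it follows from Lemma~\ref{theta constant on double cosets lemma}. Finally, $\muG(\iwahoridc)=q^{\lambda(\wtxi)}\muG(\curlB)$ by the definition of $\lambda$.

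Putting these pieces together,
\begin{displaymath}
\ZGrhops=\muG(\curlB)\sum_{w\in W}\sumxiplus q^{\lambda(\wtxi)}|\detrhoxipi|^s\thetaxipi.
\end{displaymath}
Now I would invoke~(\ref{equation first lambda}), applied with $w$ replaced by $w\wxi^{-1}$, to split $\lambda(\wtxi)=\lambda(w\wxi^{-1})+\lambda(\wxitxi)$. Performing the inner sum over $w\in W$ (with $\xi$ fixed) turns $\sum_{w\in W}q^{\lambda(w\wxi^{-1})}$ into $\sum_{v\in W}q^{\lambda(v)}$, which cancels $\muG(\curlB)$ via the identity $\muG(\curlB)\sum_{v\in W}q^{\lambda(v)}=\muG(G(\localringofintegers))=1$ obtained from the Bruhat decomposition~(\ref{equation bruhat decomposition second part}) of $G(\localringofintegers)$. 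This collapses the double sum to
\begin{displaymath}
\ZGrhops=\sumxiplus q^{\lambda(\wxitxi)}|\detrhoxipi|^s\thetaxipi.
\end{displaymath}

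Next I would apply~(\ref{equation second lambda}) together with the $W$-invariance $\pairwith{\alpha}{\xi}=\pairwith{w\alpha}{w\xi}$ to rewrite $\sum_{\alpha\in\wxi^{-1}(\posroots)}\alphapairxi=\pairwith{\alpha_{0}}{\wxi\xi}$, so that $\lambda(\wxitxi)=\pairwith{\alpha_{0}}{\wxi\xi}-\lambda(\wxi)$. Grouping terms by the value $w=\wxi$ and then reparameterising the inner sum by $\xi\mapsto w\xi$ (so that $\xi\in\Xiwplus$ becomes $\xi\in\wXiwplus$), I must verify that $|\detrhoxipi|^s\thetaxipi$ is invariant under the replacement $\xi\mapsto w\xi$. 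This holds because $(w\xi)(\unifparam)=g_w\xipi g_w^{-1}$ with $g_w\in G(\localringofintegers)$: conjugation preserves the determinant, and $\theta$ is invariant under $G(\localringofintegers)$ on both sides by Lemma~\ref{theta constant on double cosets lemma}. The expression in the proposition follows. The main obstacle is the careful bookkeeping of the length function under the decomposition $\wtxi=(w\wxi^{-1})(\wxitxi)$ and the subsequent reparameterisation; once these are handled correctly, the remaining manipulations are routine.
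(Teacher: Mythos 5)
Your proof is correct and follows essentially the same route as the paper's: reduction via Theorem~\ref{theorem reduction of integral}, the $\maxideal$-adic Bruhat decomposition of $G^+$, constancy of the integrand on Iwahori double cosets via Lemma~\ref{theta constant on double cosets lemma}, and the length-function bookkeeping from~(\ref{equation first lambda})--(\ref{equation second lambda}) together with the $W$-invariance of $\xi\mapsto|\detrhoxipi|$ and $\xi\mapsto\thetaxipi$. The paper delegates the intermediate manipulations to \cite[pp.\ 76--77]{duSL} and only spells out the points that are new in this setting (constancy of $\theta$ on double cosets and on $W$-orbits), whereas you write out the cancellation of $\muG(\curlB)\sum_{v\in W}q^{\lambda(v)}$ and the reparameterisation $\xi\mapsto w\xi$ explicitly; the substance is the same.
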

Note that this is essentially the same as \cite[(5.4)]{duSL}. In their setting, du Sautoy and Lubotzky assume that $\theta$ is (the $\maxideal$-adic absolute value of) a character of $G$, which allows them to express the product $|\detrho(\xi(\unifparam))|^s\theta(\xi(\unifparam))$ in terms of a generator $f$ for the (in their case one-dimensional) character group of $G$.
\begin{proof}
By Theorem~\ref{theorem reduction of integral} we have $\ZnonredGrhops=\ZGrhops$. We may therefore take Definition~\ref{definition reduced integral} as a starting point and follow the line of argument given in \cite[pp. 76-77]{duSL}. The
$\maxideal$-adic Bruhat Decomposition gives
\begin{displaymath}G^+=\disjointunion_{{\wtxi\in\curlW}\atop{\xi\in\Xi^+}}\curlB
g_w\xipi\curlB.\end{displaymath} By construction, $\curlB\subseteq
G(\localringofintegers)$ and $g_w\in G(\localringofintegers)$. It follows that each of
$|\detrho(h)|$ and $\theta(h)$ is constant on the double coset
$\curlB g_w \xi(\unifparam)\curlB$, the latter by Lemma~\ref{theta constant on
double cosets lemma}. In \cite{duSL}, this observation is combined
with (\ref{equation bruhat decomposition second part}),
(\ref{equation first lambda}) and (\ref{equation second lambda}) above to
obtain the desired result. In their argument they use the fact that
each of the maps $\xi\mapsto |\detrhoxipi|$ and
$\xi\mapsto\thetaxipi$ is constant on $W$-orbits in $\Xi$. It
remains to check the latter of these in our setting: We have
\begin{eqnarray*}
\theta((w\xi)(\unifparam))&=&\theta(g_w^{-1}(\xi(\unifparam))g_w)\\
&=&\theta(\xi(\unifparam)),
\end{eqnarray*}
by Lemma~\ref{theta constant on double cosets lemma}.
\end{proof}
Next we need to use information about the weights of the
representation~$\rho|_G$. This will allow us to realize the sets $\wXiwplus$ as subsets of lattice points of a polyhedral cone so that
the results of Section~\ref{reciprocity section} will apply to the weighted sum in Proposition~\ref{first combinatorial expression proposition}.
Along the way, we will define a polyhedral cell complex that will allow us to incorporate the function $\theta$ into
our analysis. Let $\rho_1,\ldots,\rho_r$ be the irreducible
components of $\rho|_G$. For each irreducible component $\rho_i$, let
$\omega_{i1},\ldots,\omega_{in_i}\in\HomTGm$ be the weights of
$\rho_i$ (so $\sum_{i=1}^r n_i=n$) and let $\omega_i\in\HomTGm$ be
the dominant weight of the contragredient representation
$g\mapsto{}^t{\rho_i(g)^{-1}}$. Then there exist
$c_k(j,i)\in\naturalszero$ such that
\begin{eqnarray}\label{weight expression}
\omega_{ij}=\omega_i^{-1}\prod_{k=1}^{l}\alpha_k^{\ckji}
\end{eqnarray}
for each $i\in [r]$ and $j\in [n_i]$ (see, for instance, \cite[31.2, Proposition]{humphreys1}). For convenience, for each
$k\in[n]$ we write $\tilde{\omega}_k$ for the $k^{th}$ weight
in the ordering
$(\omega_{11},\ldots,\omega_{1n_1},\ldots,\omega_{r1},\ldots,\omega_{rn_r})$.
We will assume that the weights are ordered such that
$\tilde{\omega}_k(t)$ is the eigenvalue for the action of $t\in T$ on the
basis element $u_k$ of~$V$ (see Definition~\ref{good representation
definition}). Fix a $\integers$-basis
\begin{equation}\label{Z-basis expression}
\xibasisi_1,\ldots,\xibasisi_{l+d}
\end{equation}
for $\Xi$ and let ${f^*}: \Xi \to \integers^{l+d}$ be the coordinate map relative to this basis. Let $f:\HomTGm\to \integers^{l+d}$ be the coordinate map relative to the dual basis. Note that
$\pairwith{\alpha}{\xi}=\fnew(\alpha).{f^*}(\xi)$ for all $\alpha\in
\HomTGm$, $\xi\in \Xi$, where the latter is the standard inner product on $\reals^{l+d}$. For $\boldv\in\integers^{l+d}$, let $\xibasis^{\boldv}$ denote $\xibasisi_1^{v_1}\ldots \xibasisi_{l+d}^{v_{l+d}}$ (so~$f^*(\xibasis^{\boldv})=\boldv$). 

Igusa showed 
that
\begin{displaymath}
\begin{array}{lll}\wXiw = \{\xi\in\Xi \suchthat \pairwith{\alpha_i}{\xi}\geq0\ \textrm{for all}\ i;\ \pairwith{\alpha_i}{\xi}>0\ \textrm{if}\ \alpha_i\in{w(\negroots)}\}
\end{array}
\end{displaymath}
\cite[pp.\ 702-3]{Igusa}. From this and (\ref{weight expression}) du Sautoy and Lubotzky deduced that

\begin{eqnarray}\label{w-open cone expression}
\wXiwplus = \wXiw\cap \{\xi\in\Xi\suchthat \pairwith{\omega_j^{-1}}{\xi}\geq0\ \ \textrm{for}\ \ j=1,\ldots,r\}.
\end{eqnarray}
\cite[Lemma~5.4]{duSL}. Put\\
\begindm
\begin{array}{llll}
B_{i}^{\geq}&:=\{\bolde\in\reals^{l+d}\suchthat f(\alpha_i).\bolde\geq 0\}& \mbox{for}\ \ i=1,\ldots,l;\\
B_{l+j}^{\geq}&:=\{\bolde\in\reals^{l+d}\suchthat f(\omega_j^{-1}).\bolde\geq 0\}& \mbox{for}\ \ j=1,\ldots,r.
\end{array}
\enddm

We define our polyhedral cone as
\begindm
\curlC=:\bigcap_{i=1}^{l+r}B_i^{\geq}. 
\enddm
For each $i, j\in [n]$ with $i\neq j$, put 
\begindm
H^{\leq}_{ij}:=\{\bolde\in\reals^{l+d}\suchthat \fnew(\weightsi_i\weightsi_j^{-1}).\bolde\leq0\}. 
\enddm
For each $\sigma\in \Sym(n)$
put 
\begindm
\begin{array}{lll}
\Xi(\sigma)&:=&\{\xi\in \Xi\suchthat \xi(\unifparam)\in
T_{\sigma}(\localfield)\},\\
\Xiplus(\sigma)&:=&\Xiplus\cap\Xi(\sigma).
\end{array}
\enddm
(see Definition~\ref{definition T sigma}). Observe that
\begindm
{f^*}\left(\Xi(\sigma)\right)=\bigcap_{1\leq i<j\leq
n}H^{\leq}_{\sigma(i)\sigma(j)}
\enddm
and 
\begindm
\curlC_{I_w}=\curlC\cap \bigcap_{i\in I_w}B_i^>, 
\enddm
with $I_w\subseteq [l]$ as defined in (\ref{equation definition of I_w}). We thus have 
\begindm
{f^*}(\wXiwplus)=\curlC_{I_w}\cap\integerslplusd. 
\enddm
We are now ready to define our polyhedral cell complex $(\curlC,\curlF)$: take $\curlC$ as above and $H_{ij}$ ($1\leq i<j\leq n$) as internal hyperplanes. By Lemma~\ref{local linearity of theta
lemma}, for each $\sigma\in\Sym(n)$ we can find non-negative
integers $m_i(\sigma)$ such that
\begindm
\theta(\xibasis^{\bolde}(\unifparam))=q^{m_1(\sigma)\pairwith{\tilde{\omega}_1}{\xibasis^{\bolde}}+\cdots+m_n(\sigma)\pairwith{\tilde{\omega}_n}{\xibasis^{\bolde}}}
\enddm
for all $\bolde\in {f^*}\left(\Xiplus(\sigma)\right)$, and this
equality holds for almost all primes $\maxideal$. Fix some total ordering on
$\Sym(n)$. To each cell $F$ in the complex $(\curlC, \curlF)$
associate the minimal element $\sigma_F$ of $\Sym(n)$ satisfying
$F\subseteq {f^*}(\Xiplus(\sigma_F))$. Define a function $\gamma:
\curlF\to\integerslplusd$ by
\begin{equation}\label{equation replaceable by constant function}
\gamma(F):=f\left(\prod_{i=1}^n
(\tilde{\omega}_i)^{m_i(\sigma_F)}\right).
\end{equation}
It follows that
$\theta(\xibasis^{\bolde}(\unifparam))=q^{\gamma(F_{\bolde}).\bolde}$
for all $\bolde\in\Xiplus$ and that $\gamma$ is piecewise constant on the
complex (cf. Definition~\ref{replaceable by constant definition}). We are
ready to give our combinatorial expression. By
Proposition~\ref{first combinatorial expression proposition}, we
have
\begin{proposition}\label{second combinatorial expression proposition}
For almost all primes $\maxideal$,
\begin{eqnarray*}
\ZcurlGrhoidealps&=&\sum_{w\in W}q^{-\lambda(w)}
\genfn_{\curlC_{I_w},A,B,\gamma}(q,q^{-s}),
\end{eqnarray*}
where $A:=\fnew(\alpha_0)$, $B:=\fnew(\detrho|_{T})$,
$\gamma$ is the piecewise constant function $\curlF\to\integerslplusd$
defined in (\ref{equation replaceable by constant function}), and $m:=l+d$
(cf. Definition~\ref{generating function definition}).
\end{proposition}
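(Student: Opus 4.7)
The plan is to start from Proposition~\ref{first combinatorial expression proposition} and translate each factor of its summand into the coordinate language of $\bolde := f^*(\xi) \in \integerslplusd$, then recognize the resulting sum as $\genfn_{\curlC_{I_w},A,B,\gamma}(q,q^{-s})$ from Definition~\ref{generating function definition}. Since all the ingredients (the bijection $f^*$ on the indexing sets, the coordinate forms of the three factors $q^{\pairwith{\alpha_0}{\xi}}$, $|\detrho(\xipi)|^s$ and $\theta(\xipi)$) have already been assembled in the paragraphs preceding the statement, the proof is essentially a line-by-line substitution.

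First, I would record the three elementary translations. The bijection $f^*$ induces an identification $\wXiwplus \leftrightarrow \curlC_{I_w} \cap \integerslplusd$, reindexing the inner sum. The identity $\pairwith{\alpha}{\xi} = f(\alpha).f^*(\xi)$ gives $q^{\pairwith{\alpha_0}{\xi}} = q^{A.\bolde}$ directly from $A := f(\alpha_0)$. For the determinant factor, $\detrho|_T \in \HomTGm$ is a character of $T$, so $\detrho \circ \xi : \Gmult \to \Gmult$ is the power map $\tau \mapsto \tau^{\pairwith{\detrho|_T}{\xi}}$; evaluating at $\unifparam$ gives $\detrho(\xipi) = \unifparam^{\pairwith{\detrho|_T}{\xi}}$, hence $|\detrho(\xipi)|^s = q^{-s\, B.\bolde}$ with $B := f(\detrho|_T)$.

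The substantive step is the translation of $\theta(\xipi)$. Here I would invoke Lemma~\ref{local linearity of theta lemma}: for each $\sigma \in \Sym(n)$ and almost all primes $\maxideal$, $\theta(t) = \prod_i |\lambda_i(t)|^{-m_i(\sigma)}$ on $T_\sigma(\localringofintegers)$ with $m_i(\sigma) \in \naturalszero$. Because $\rho$ is good, the eigenvalues $\lambda_i$ of $T$ on the basis $(u_i)$ are precisely the weight characters $\tilde{\omega}_i$, so translating via $f^*$ yields
$$\theta(\xibasis^{\bolde}(\unifparam)) = q^{\gamma(F_{\bolde}).\bolde}$$
for $\bolde \in f^*(\Xiplus(\sigma))$ lying in any cell $F$ with $\sigma_F = \sigma$, directly from the definition (\ref{equation replaceable by constant function}) of $\gamma$. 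Since the cells of $(\curlC,\curlF)$ partition $\curlC$ and each $F$ is, by choice of $\sigma_F$, contained in $f^*(\Xiplus(\sigma_F))$, the single piecewise constant function $\gamma$ captures $\theta$ uniformly across $\curlC \cap \integerslplusd$. Combining the three factors produces an exponent $(A+\gamma(F_{\bolde})).\bolde - (B.\bolde)s$, matching Definition~\ref{generating function definition} verbatim and yielding the claimed identity.

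The main obstacle I anticipate is bookkeeping of the almost-all-primes hypotheses: the identity requires Lemma~\ref{equivalent representation lemma}, Proposition~\ref{proposition product of thetas}, Lemma~\ref{local linearity of theta lemma}, Proposition~\ref{first combinatorial expression proposition}, and Condition~\ref{good reduction condition} to all hold simultaneously, for every $w \in W$ and every cell $F \in \curlF$. Each individually excludes only finitely many primes, and since $W$ and $\curlF$ are finite, a union-bound argument confirms that the combined exceptional set remains finite, so the conclusion holds for almost all $\maxideal$.
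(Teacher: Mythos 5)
Your proposal is correct and follows essentially the same route the paper takes: the paper treats Proposition~\ref{second combinatorial expression proposition} as an immediate consequence of Proposition~\ref{first combinatorial expression proposition} together with the coordinate translations assembled in the preceding paragraphs (the identification $f^*(\wXiwplus)=\curlC_{I_w}\cap\integerslplusd$, the pairing identity $\pairwith{\alpha}{\xi}=f(\alpha).f^*(\xi)$, and the formula $\theta(\xibasis^{\bolde}(\unifparam))=q^{\gamma(F_{\bolde}).\bolde}$ derived from Lemma~\ref{local linearity of theta lemma}), and you simply spell out those substitutions explicitly, including the routine union bound over the finite sets $W$ and $\curlF$ to control the exceptional primes.
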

\section{Proof of the main theorem}\label{maintheorem section}
\subsection{Uniformity}\label{uniformity subsection}
Since we will need to apply Lemma~\ref{inversion of a cell lemma}, we first establish the following.
\begin{lemma}\label{non-negative inner product lemma}
For all $w\in W$, $1\neq\xi\in \wXiwplus$ and $F\in\curlF$, we have
$\pairwith{\alpha_0}{\xi}\geq 0$, $\pairwith{\detrho|_{T}}{\xi}>
0$ and $\pairwith{\prod_{i=1}^n
(\tilde{\omega}_i)^{m_i(\sigma_F)}}{\xi}\geq0$. Furthermore, the cone $\curlC$ is pointed.
\end{lemma}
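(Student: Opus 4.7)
The plan is to deduce all four assertions from three ingredients: the explicit cone description $\wXiwplus = \wXiw \cap \{\xi \in \Xi \suchthat \pairwith{\omega_j^{-1}}{\xi} \geq 0\ \text{for}\ j=1,\ldots,r\}$ recorded in (\ref{w-open cone expression}), the weight expansion (\ref{weight expression}), and the faithfulness of $\rho|_T$ (inherited from that of $\rho$, since $T\leq G$ forces $\ker\rho|_T \subseteq \ker\rho = 1$). Writing the character group additively via the bilinearity of the pairing, the first claim is immediate: $\pairwith{\alpha_0}{\xi} = \sum_{\alpha\in\posroots}\pairwith{\alpha}{\xi}$, and every positive root is a non-negative integral combination of the fundamentals $\alpha_1,\ldots,\alpha_l$, so the defining inequalities $\pairwith{\alpha_i}{\xi}\geq 0$ of $\wXiwplus$ make each summand non-negative.

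For claim (2), I would first observe that $\detrho|_T$ corresponds additively to $\sum_{k=1}^n \tilde{\omega}_k$. Applying (\ref{weight expression}), each weight decomposes as $\omega_{ij} = -\omega_i + \sum_{k} c_k(j,i)\alpha_k$ with $c_k(j,i)\in\naturalszero$. The inequalities $\pairwith{\omega_j^{-1}}{\xi} = -\pairwith{\omega_j}{\xi}\geq 0$ and $\pairwith{\alpha_k}{\xi}\geq 0$, both valid on $\wXiwplus$, then show that $\pairwith{\tilde{\omega}_k}{\xi}\geq 0$ for every $k$, giving $\pairwith{\detrho|_T}{\xi}\geq 0$. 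The delicate step is strict positivity for $\xi\neq 1$. For this I would invoke faithfulness of $\rho|_T$: it forces the common zero-set of the weights $\{\tilde{\omega}_k\}$ in $T$ to be trivial, equivalently that the weights span $\HomTGm\otimes\rationals$ (otherwise a nontrivial rational cocharacter would be annihilated by every weight, producing a positive-dimensional subtorus in $\ker\rho|_T$). Consequently, if $\xi\neq 0$ then some $\pairwith{\tilde{\omega}_k}{\xi}\neq 0$, and together with the preceding sign analysis this forces that summand, and hence the total sum, to be strictly positive.

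Claim (3) then follows at once: by Lemma~\ref{local linearity of theta lemma} each $m_i(\sigma_F)$ lies in $\naturalszero$, and the non-negativity of $\pairwith{\tilde{\omega}_i}{\xi}$ on $\wXiwplus$ established in (2) yields $\sum_i m_i(\sigma_F)\pairwith{\tilde{\omega}_i}{\xi} \geq 0$. For claim (4), I would argue directly that $\curlC\cap(-\curlC) = \{0\}$: if both $\bolde$ and $-\bolde$ satisfy every defining inequality of $\curlC$, then $\fnew(\alpha_i).\bolde = 0$ for $i=1,\ldots,l$ and $\fnew(\omega_j^{-1}).\bolde = 0$ for $j=1,\ldots,r$. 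Pulling back via $(f^*)^{-1}$ to $\xi \in \Xi\otimes\reals$, this gives $\pairwith{\alpha_i}{\xi} = 0$ and $\pairwith{\omega_j}{\xi} = 0$ for all $i,j$; (\ref{weight expression}) (extended linearly) then forces $\pairwith{\tilde{\omega}_k}{\xi} = 0$ for every weight, and the faithfulness argument from (2) yields $\xi = 0$, so $\bolde = 0$.

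The only non-routine step is the strict-positivity deduction in (2); everything else reduces to straightforward sign-tracking in non-negative linear combinations.
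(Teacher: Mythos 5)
Your proof is correct and follows essentially the same route as the paper: non-negativity from the defining inequalities of $\wXiwplus$ via (\ref{weight expression}) and (\ref{w-open cone expression}), strict positivity of $\pairwith{\det\rho|_T}{\xi}$ from the fact that the weights generate $\HomTGm$ (which you correctly trace to faithfulness of $\rho|_T$), and pointedness as a consequence. The only cosmetic differences are that you spell out the faithfulness-implies-spanning argument (which the paper leaves implicit) and you phrase pointedness directly as $\curlC\cap(-\curlC)=\{0\}$ over $\reals$ rather than via strict positivity on the nonzero lattice points.
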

\begin{proof}
The first and third inequalities follow from (\ref{weight expression}) and (\ref{w-open cone expression}). Note that $\detrho|_{T}=\prod_{i=1}^n \tilde{\omega}_i$. Fix $w\in W$ and $\xi\in\wXiwplus$. By (\ref{weight expression}) and (\ref{w-open cone expression}), $\pairwith{\tilde{\omega}_i}{\xi}\geq 0$ for all $i\in [n]$. Thus $\pairwith{\detrho|_{T}}{\xi}\geq 0$. Suppose that $\pairwith{\detrho|_{T}}{\xi}= 0$. Then $\pairwith{\tilde{\omega}_i}{\xi}=0$ for all $i\in [n]$. However, the weights $\tilde{\omega}_i$ generate $\HomTGm$, whence $\xi=1$. The pointedness of $\curlC$ follows directly from the fact that $\pairwith{\detrho|_{T}}{\xi}>
0$ for all $w\in W$, $1\neq\xi\in\wXiwplus$.
\end{proof}
Recall that $\curlF_{I_w}=\{F\in \curlF\suchthat F\subseteq
\curlC_{I_w}\}$. By Proposition~\ref{second combinatorial expression
proposition}, we have
\begin{eqnarray*}
\ZcurlGrhoidealps&=&\sum_{w\in W}q^{-\lambda(w)}
\sum_{F\in\curlF_{I_w}}\genfn_{F,A,B,\gamma}(q,q^{-s}).
\end{eqnarray*}
Thus, by Lemma~\ref{non-negative inner product lemma} and
Lemma~\ref{inversion of a cell lemma}, we have that for almost all
primes $\maxideal$, $\ZcurlGrhoidealps$ is a sum of
rational functions, hence is itself a rational function in $q,
q^{-s}$, depending only on the group $G$ and the representation
$\rho$. This proves part (1) of Theorem~\ref{main theorem}.

\subsection{The Functional Equation}\label{functional equation
subsection} In this section we complete the proof of Theorem~\ref{main theorem} by applying the reciprocity results of Section~\ref{reciprocity section}. Suppose then that the number $r$ of irreducible components of the representation $\rho$ is equal to the dimension $d$ of the maximal central torus $S$. In Section~\ref{bruhat section} we chose an arbitrary basis for $\Xi$ (see (\ref{Z-basis expression})). We now specify this basis. Note that the group $\HomTGm$ is generated by the weights, since $\rho$ is faithful. However, by (\ref{weight expression}), the weights are generated by $\alpha_1,\ldots,\alpha_l$ together with $\omega_1^{-1},\ldots,\omega_r^{-1}$. Since $r=d$ and $\HomTGm$ has rank $l+d$ over $\integers$, this implies that $(\alpha_1,\ldots,\alpha_l,\omega_1^{-1},\ldots,\omega_r^{-1})$ is a $\integers$-basis for $\HomTGm$. We deduce that $\curlC$ is simplicial. Write $\alpha_{l+i}:=\omega^{-1}_i$ for $i=1,\ldots,d$. Let $\xi_1,\ldots,\xi_{l+d}$ be the dual basis to $\alpha_1,\ldots,\alpha_{l+d}$; that is, the elements of $\Xi$ having the property that $\pairwith{\alpha_i}{\xi_j}=\delta_{ij}$ for all $i, j\in [l+d]$. We choose~$(\xi_i)$ and $(\alpha_i)$ as our (ordered) bases for $\Xi$ and $\HomTGm$ respectively and use them to define coordinate maps ${f^*}:\Xi\to\integerslplusd$ and $\fnew:\HomTGm\to\integerslplusd$ as described in Section~\ref{bruhat section}. Note that $\curlC\cap\integers^{l+d}=\naturalszero^{l+d}$, so in fact $\curlC$ is simple. Set $\boldanought=f^*(\xi_{l+1}\ldots\xi_{l+d})=(\underbrace{0,\ldots,0}_l,\underbrace{1,\ldots,1}_d)$. It follows from the definitions and (\ref{weight expression}) that
\begindm
\boldanought\in \integers^{l+d}\cap\curlC\cap\bigcap_{{i,j\in
[n]}\atop{i<j}}H_{ij}
\enddm
and for each $I\subseteq [l]$ we have
\begindm
\curlC_{[l+d]\backslash
I}\cap\integers^{l+d}=\boldanought+\curlC_{[l]\backslash
I}\cap\integers^{l+d}.
\enddm
By Proposition~\ref{second combinatorial expression proposition} and
Corollary~\ref{fn eq corollary} we have
\begindm
\ZcurlGrhoidealps|_{\pgoestopinverse}=(-1)^{l+d}q^{|\Phi^+|\,+(\fnew(\alpha_0)+\loclinfnonZlattice(F_{\boldanought})).\boldanought-\left({\fnew(\det\rho|_T)}.\boldanought\right)
s}\ZcurlGrhoidealps.
\enddm
By construction $\fnew(\alpha_0).\boldanought=\pairwith{\prod_{\alpha\in\posroots}\alpha}{\xi_{l+1}\ldots\xi_{l+d}}=0$. Next, using (\ref{weight expression}) we have ${\fnew(\det\rho|_T)}.\boldanought=\pairwith{\prod_{i=1}^n \tilde{\omega_i}}{\xi_{l+1}\ldots\xi_{l+d}}=n$. (Recall that $n$ is the dimension of the representation $\rho$.) Thus our functional equation becomes
\begindm
\ZcurlGrhoidealps|_{\pgoestopinverse}=(-1)^{l+d}q^{|\Phi^+|\,+\loclinfnonZlattice(F_{\boldanought}).\boldanought-ns}\ZcurlGrhoidealps.
\enddm
By Lemma~\ref{non-negative inner product lemma} we have that $\loclinfnonZlattice(F_{\boldanought})).\boldanought\geq 0$. In particular, if $\nonredG$ is reductive then $\theta(h)=1$ identically on $G=\curlG$ so $\loclinfnonZlattice(F_{\boldanought}))=0$. This
proves part (2) of Theorem~\ref{main theorem}.\\

As promised in Section~\ref{introduction section}, we now explain the error in the proof of \cite[Theorem~A]{duSL}. The authors realize the zeta function in their setting as a generating function over a set $I$ of lattices points of a cone. The existence of what they call a `dominating' dominant weight guarantees that the cone is simplicial. In order to explicitly compute the zeta function, they further require the cone to be simple. To justify this, they state ``Note that, since $\pairwith{\omega_1^{-1}}{\xi}\in\integers$, ${1/m\sum_{j=1}^l (b_j(1)/m_1-c_j)e_j\in\integers}$'' (see \cite[p.\ 82]{duSL}). This inference is not valid, since the second expression is an integer if and only if $1/m_1\pairwith{\omega_1^{-1}}{\xi}$ is an integer (cf. the expression for $\pairwith{\omega_1^{-1}}{\xi}$ on \cite[p.\ 80]{duSL}). This is not true in general, since $\omega_1$ is some dominant weight and there is no control over the integer $m_1$ defined on \cite[p.\ 77]{duSL}. If the `dominating' dominant weight assumption \cite[Assumption~5.5]{duSL} is replaced by the more restrictive assumption that ${|m_1|=1}$ (in particular, ${r=1}$ is sufficient), their cone defining $I$ becomes simple. \\

\section{Counterexamples}\label{counterexample section}

Let $T_d$ denote the $d$-dimensional split torus ($d\geq 2$). We define an infinite family of representations of $T_d$ whose associated local zeta functions do not satisfy functional equations. In these examples, $r=2d-1$, illustrating that the condition $r=d$ in Theorem~\ref{main theorem} cannot be dropped.

\begin{proposition}\label{proposition counterexample}
Let $G=T_d:=\Gmult^d$, where $d\geq 2$. For each integer $k\geq 3$ define a
faithful $\rationals$-rational representation $\rho_{d,k}:G\rightarrow \GL{2d-1}$ by
$\rho_{d,k}(x_1,\ldots,x_d)=\diag(x_1,\ldots,x_d,x_1^k x_d^{-1},\ldots,x_{d-1}^k x_d^{-1})$. For all primes $p$, 
\begindm
Z_{G,\rho_{d,k},p}(s)=\intGplus|\det\rho_{d,k} (g)|^s\mu_G(g) 
\enddm
does not satisfy a functional equation.
\end{proposition}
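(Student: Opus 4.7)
Since $G=T_d$ is a torus with trivial unipotent radical, $\theta\equiv 1$, the Weyl group $W$ is trivial, and $l=0$; the $2d-1$ diagonal characters of $\rho_{d,k}$ are its irreducible constituents, so $r=2d-1>d$. By Proposition~\ref{first combinatorial expression proposition} (or a direct calculation), the zeta function reduces to
\begindm
Z_{G,\rho_{d,k},p}(s)=\sum_{\xi\in\Xi^+}q^{-v(\det\rho_{d,k}(\xi(\pi)))s},
\enddm
where $\Xi\cong\integers^d$ via $\xi=(e_1,\ldots,e_d)\mapsto\xi(\pi)=(\pi^{e_1},\ldots,\pi^{e_d})$. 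Reading off the diagonal entries of $\rho_{d,k}(\xi(\pi))$ gives $\Xi^+=\{e\in\integers^d:e_i\geq 0\,(i\in[d]),\ ke_i\geq e_d\,(i\in[d-1])\}$ and $v(\det\rho_{d,k}(\xi(\pi)))=(k+1)\sum_{i=1}^{d-1}e_i-(d-2)e_d$.

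Setting $T_1:=q^{-(k+1)s}$ and $T_2:=q^{(d-2)s}$, I will first evaluate $Z$ in closed form. Summing the geometric series over $e_d\in\{0,\ldots,k\min_{i\in[d-1]}e_i\}$, then stratifying the sum on $(e_1,\ldots,e_{d-1})\in\naturalszero^{d-1}$ by $m:=\min_i e_i$ (substituting $e_i=m+w_i$) and handling the condition $\min_i w_i=0$ by inclusion-exclusion, yields
\begindm
Z_{G,\rho_{d,k},p}(s)=\frac{1+T_1^{d-1}(T_2+T_2^2+\cdots+T_2^{k-1})}{(1-T_1)^{d-1}(1-T_1^{d-1}T_2^k)}.
\enddm
The substitution $q\mapsto q^{-1}$ sends $T_j\mapsto T_j^{-1}$; a direct manipulation gives
\begindm
\frac{Z|_{q\to q^{-1}}}{Z}=(-1)^dT_1^{d-1}T_2\cdot\frac{M}{D},\quad M:=Q+T_1^{d-1}T_2^{k-1},\ D:=1+T_1^{d-1}T_2Q,\ Q:=1+T_2+\cdots+T_2^{k-2}.
\enddm

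The main step will be to rule out a functional equation. Writing $u:=q^{-s}$ (so $T_1=u^{k+1}$, $T_2=u^{-(d-2)}$), $Z$ is a rational function of $u$ alone, so~(\ref{equation fn eq}) would force the above ratio to have the form $\pm u^\gamma$; equivalently, $M/D$ must be a Laurent monomial in $u$. For $d=2$ one has $T_2=1$, $M=(k-1)+T_1$, $D=1+(k-1)T_1$, and $M=c\,u^\gamma D$ forces $(k-1)^2=1$, excluded by $k\geq 3$. For $d\geq 3$, the $u$-supports are
\begindm
\supp_u M=\{-j(d-2):j=0,\ldots,k-2\}\cup\{2d+k-3\},\quad\supp_u D=\{0\}\cup\{2d+k-3+i(d-2):i=0,\ldots,k-2\},
\enddm
each of cardinality $k$. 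If $\supp_u M=\supp_u D+\gamma$, matching the maxima forces $\gamma=-(k-2)(d-2)$, while matching the sums of elements requires $-(k-2)(d-1)(k+1)=k\gamma=-k(k-2)(d-2)$, i.e.\ $(k-2)(d+k-1)=0$, contradicting $d,k\geq 3$. The main obstacle is the uniform closed-form evaluation of $Z$, since the inner sum depends on $\min_i e_i$ and requires a careful double reduction; once it is in hand, the subsequent support comparison is elementary.
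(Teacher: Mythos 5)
Your proposal is correct and, at the top level, takes the same route as the paper: compute $Z_{G,\rho_{d,k},p}(s)$ explicitly as a rational function of $q^{-s}$, then show the numerators cannot match after inverting $q$. Your closed form and the ratio $Z|_{q\to q^{-1}}/Z$ agree with the paper's formulas (identify $T_1$ with each of $X_1,\dots,X_{d-1}$ and $T_2$ with $X_d$). Where the two diverge is in the two sub-steps. For the evaluation, the paper avoids the stratify-by-$\min$-plus-inclusion-exclusion step that you flag as your main obstacle: it notes $\curlC$ is simplicial with ray generators $f_i=e_i$ ($i<d$) and $f_d=ke_d+\sum_{i<d}e_i$, and writes $\curlC\cap\integers^d$ as a disjoint union of translates of $\naturalszero\{f_1,\ldots,f_d\}$ by the $k$ points of a fundamental domain $D_0=\{0\}\cup\{je_d+\sum_{i<d}e_i: j=1,\ldots,k-1\}$, from which the rational expression is immediate; this is cleaner and you may wish to adopt it. For the contradiction, the paper compares two specific exponents in a rather terse passage (which also carries a harmless typo, $X_d^{k-2}$ where $X_d^{k-1}$ is meant); your comparison of the maxima and of the sums of the $u$-supports of $M$ and $D$ is more transparent. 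One thing to make explicit in the $d\geq3$ case is that both supports have exactly $k$ distinct elements: the exponents $-j(d-2)$, $j=0,\dots,k-2$, are distinct and nonpositive (since $d-2\geq1$) while the extra exponent $2d+k-3$ is positive, so no collisions occur; this is what licenses matching both maxima and sums. A final small imprecision: (\ref{equation fn eq}) forces $M/D=\pm q^a u^\gamma$, not merely $\pm u^\gamma$, but since your $d=2$ and $d\geq3$ arguments both tolerate an arbitrary constant factor $c$, this does not affect the conclusion.
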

\begin{proof}
Fix $d$ and $k$, writing $\rho=\rho_{d,k}$ and $G=G(\Qpadic)$. For $\boldy\in\integers^d$ put
\begindm G_{\boldy}=\{\boldx\in G\suchthat v(x_i)=y_i \ \mbox{for}\ i=1,\ldots,d\}.\enddm
We have 
\begindm
G_{\boldnought}=\{{g\in G}\suchthat \rho(g)\in
\GL{2d-1}(\Zp)\}= G(\Zp),
\enddm
and for all
$\boldy\in\integers^{d}$, 
\begindm
G_{\boldy}=G_{\boldnought}.(p^{y_1},\ldots,p^{y_d}). 
\enddm
This implies that
$\mu_{G}(G_{\boldy})=\mu_{G}(G_{\boldnought})=1$. If $\boldx\in G$, put $y_i=v(x_i)$ for $i=1,\ldots,d$.
Put
\begindm
\curlC=\{\boldu\in\realsnonneg^d\suchthat ku_i-u_d\geq 0\ \mbox{for}\ i=1,\ldots,d-1\}.
\enddm
Then $\boldx\in G^+ \iff  \boldy\in\curlC\cap\integers^d$. Setting $X_i=p^{-(k+1)s}$ for $i=1,\ldots,d-1$ and $X_d=p^{(d-2)s}$ we have

\begin{eqnarray*}
Z_{G,\rho,{p}}(s)&=&\intGplus|\detrhog|^s\mu_{G}(g)\\
&=&\sum_{\boldy\in\curlC\cap\integers^d}p^{(-(k+1)(y_1+\cdots+y_{d-1})+(d-2)y_d)s}\mu_{G}(G_{\boldy})\\
&=&\sum_{\boldy\in\curlC\cap\integers^d}X_1^{y_1}\ldots X_d^{y_d}.
\end{eqnarray*}

Let $\{e_1,\ldots,e_d\}$ be the standard basis for $\reals^d$. Put $f_i=e_i$ for $i=1,\ldots,d-1$ and $f_d=ke_d+\sum_{j=1}^{d-1}e_j$. It is straightforward to check that $\curlC=\thespan_{\realsnonneg^{d}}\{f_1,\ldots,f_d\}$. Put $D_0=\{0\}\cup\{je_d+\sum_{i=1}^{d-1}e_i\suchthat j=1,\ldots,k-1\}$. Another routine check shows that
\begindm
\curlC\cap\integers^d=\coprod_{\boldu\in D_0}(\boldu+\thespan_{\naturalszero}\{f_1,\ldots,f_d\}).
\enddm
It follows that
\begin{displaymath}
Z_{G,\rho,{p}}(s)=\frac{1+X_1\ldots X_{d}(1+X_d+\cdots+X_d^{k-2})}{(1-X_1)(1-X_2)\ldots(1-X_{d-1})(1-X_1\ldots X_{d-1}X_d^k)},
\end{displaymath}
hence
\begin{displaymath}
Z_{G,\rho,{p}}(s)|_{{X_i\rightarrow X_i^{-1}}\atop{i=1,\ldots,d}}=(-1)^{d}\frac{X_1\ldots X_d(1+X_d+\cdots+X_d^{k-2}+X_1\ldots X_{d-1}X_d^{k-1})}{(1-X_1)(1-X_2)\ldots(1-X_{d-1})(1-X_1\ldots X_{d-1}X_d^k)}.
\end{displaymath}

Suppose that $Z_{G,\rho,p}(s)$ satisfies a functional equation of the form 
\begindm
Z_{G,\rho,{p}}(s)|_{p\to p^{-1}}=(-1)^m p^{a+bs}Z_{G,\rho,{p}}(s). 
\enddm
Then
\begindm
\begin{array}{ll}
&(-1)^m p^{a+bs}(1+X_1\ldots X_d (1+X_d+\cdots+X_d^{k-2}))\\
=&(-1)^d X_1\ldots X_d (1+X_d+\cdots+X_d^{k-2}+X_1\ldots X_{d-1} X_d^{k-2}).
\end{array}
\enddm

If $d=2$, comparing highest powers of~$p^{-s}$ immediately leads to a contradiction. If ${d>2}$, comparing lowest and next-to-lowest powers of $p^{-s}$ gives $p^{a+bs}=p^{-(d-2)s}$ and ${X_1\ldots X_{d-1}X_d^k=1}$, hence $d+k-1=0$, which is impossible.
\end{proof}

It is easy to see that there are also families of representations of $T_d$ with $r>d$ whose associated zeta functions do satisfy a functional equation. If the associated cone $\curlC$ is simplicial, the existence of a functional equation in fact depends on the configuration of the lattice points in $\curlC$. Recalling the definitions of the sets $D_0$ and $D_1$ used in the proof of Lemma~\ref{inversion of a cell lemma}, we note that the generating function satisfies a functional equation if and only if $D_0$ maps onto $D_1$ under a translation. This is a highly restrictive condition on the cone.\\

\fontsize{8.5}{10.5}\selectfont
\textsc{Department of Mathematics and Applied Mathematics, University of Cape Town, Private Bag, Rondebosch 7701, Cape Town, South Africa}

\textit{E-mail address:} \texttt{mark.berman@uct.ac.za}

\end{document}